\newtheorem{thm}{Theorem}[section]
\newtheorem{prop}[thm]{Proposition}
\newtheorem{lem}[thm]{Lemma}
\newtheorem{cor}[thm]{Corollary}
\theoremstyle{remark}
\newtheorem{rem}[thm]{Remark}
\newtheorem{exa}[thm]{Example}
\theoremstyle{definition}
\newtheorem{defi}[thm]{Definition}
\newcommand{\Z}{\mathbb{Z}}
\newcommand{\Q}{\mathbb{Q}}
\newcommand{\R}{\mathbb{R}}
\newcommand{\N}{\mathbb{N}}
\DeclareMathOperator{\id}{id}
\DeclareMathOperator{\rk}{rk}
\DeclareMathOperator{\tors}{tors}
\DeclareMathOperator{\diam}{diam}
\def\epsilon{\varepsilon}
\def\gvertex#1{%
  \fill #1 circle(0.1);
}
\def\draftinfo{}
\author{Clara L\"oh}
\author{Matthias Mann}
\title[Finitely generated Abelian groups with equal growth functions]{Which finitely generated Abelian groups admit\\ equal growth functions?}
\date{\today.\ \copyright{\ C.~L\"oh, M.~Mann 2013}, \draftinfo
     MSC~2010 classification: 05C25, 05C63, 20F65}
\begin{document}

\begin{abstract}
  We show that finitely generated Abelian groups admit equal growth 
  functions with respect to symmetric generating sets if and only if 
  they have the same rank and the torsion parts have the same parity. 
  In contrast, finitely generated Abelian groups 
  admit equal growth functions with respect to monoid generating sets 
  if and only if they have same rank. 
  Moreover, we show that the size of the torsion part is in fact
  determined by the set of \emph{all} growth functions of a finitely
  generated Abelian group.
\end{abstract}

\maketitle

\section{Introduction}

Cayley graphs of finitely generated Abelian groups are rather rigid:
Isomorphisms between Cayley graphs of finitely generated Abelian
groups are almost affine~\cite[Theorem~1.3]{loeh} and hence finitely
generated Abelian groups admit isomorphic Cayley graphs if and only if
they have the same rank and the torsion parts have the same
size~\cite[Corollary~1.4]{loeh}. Moreover, it is known that the rank
coincides with the growth rate~\cite[Chapter~VI]{delaharpe} and 
also that the parity of the torsion part is encoded in the growth function
of any Cayley graph~\cite{mathoverflow, loeh}. Thus it is a natural
question whether also the exact size of the torsion part can be read
off the growth functions of finitely generated Abelian
groups~\cite[Problem~4.1]{loeh}.

In the following, we show that finitely generated Abelian groups (of
non-zero rank) admit equal growth functions with respect to
\emph{symmetric} generating sets if and only if they have the same rank and
the torsion parts have the same parity (Theorem~\ref{thmsymmetric}).
In contrast, finitely generated Abelian groups (of non-zero rank)
admit equal growth functions with respect to \emph{monoid} generating sets
if and only if they have same rank (Theorem~\ref{thmmonoid}). However,
we show that the size of the torsion part is in fact determined by the
set of \emph{all} growth functions of a finitely generated Abelian
group (Corollary~\ref{corgrowthset}).

We now describe the results in more detail. For the sake of
completeness, let us briefly recall some basic notation: A subset~$A
\subset G$ of a group~$G$ is called \emph{symmetric} if for all~$g \in
A$ also~$g^{-1} \in A$.


\begin{defi}[Word metric, (spherical) growth function]
  Let $G$ be a finitely generated group and let $S$ be a (not necessarily 
  symmetric) finite monoid generating set of~$G$.
  \begin{itemize}
    \item The \emph{word metric on~$G$ with respect to~$S$} is defined 
      by 
      \begin{align*}
        d_S \colon G \times G & \longrightarrow \R_{\geq 0} \\
        (g,h) & \longmapsto 
        \min\bigl\{ n \in \N
            \bigm| \exists_{\,s_1, \dots, s_n \in S} \;\;\; 
                   h^{-1} \cdot g = s_1 \cdot \dots \cdot s_n
            \bigr\}.
      \end{align*}
      (Notice that the ``metric''~$d_S$ in general will \emph{not} 
      be symmetric if~$S$ is not symmetric.)
    \item For~$r \in \N$ we write $B_{G,S}(r) := \{ g \in G \bigm|
      d_S(g,e) \leq r \bigr\}$ for the \emph{ball of radius~$r$}
      around the neutral element~$e$ in~$G$ with respect to the word
      metric~$d_S$.
    \item The \emph{spherical growth function of~$G$ with respect to~$S$} 
      is given by
      \begin{align*}
        \sigma_{G,S} \colon \N & \longrightarrow \N\\
        r & \longmapsto 
        \bigl| \bigl\{ g \in G \bigm| d_S(g,e) = r \bigr\}\bigr|
        = \bigl| B_{G,S} (r) \setminus B_{G,S}(r-1)\bigr|.
      \end{align*}
    \item The \emph{growth function of~$G$ with respect to~$S$} 
      is given by
      \begin{align*}
        \beta_{G,S} \colon \N & \longrightarrow \N\\
        r & \longmapsto 
        \bigl| B_{G,S}(r)\bigr|
        = \sum_{t=0}^r \sigma_{G,S}(t).
      \end{align*}
  \end{itemize}
\end{defi}

Via the \v Svarc-Milnor lemma, growth functions of 
groups are related to volume growth functions of Riemannian
manifolds~\cite[Theorem~IV.23, Prop\-osition~VI.36]{delaharpe}.
Furthermore, growth functions of groups contain valuable large-scale
geometric information that plays an important role in geometric group
theory~\cite[Chapters~VI--VIII]{delaharpe}.

\begin{defi}
  Two finitely generated groups~$G$ and~$G'$ \emph{admit equal growth 
  functions} if there exist finite monoid generating sets~$S
  \subset G$ and $S' \subset G'$ of~$G$ and~$G'$ respectively such
  that the corresponding growth functions coincide, i.e., such 
  that $\beta_{G,S} = \beta_{G',S'}$. Analogously, $G$ and $G'$ \emph{admit 
    equal growth functions with respect to symmetric generating sets} 
  if there exist finite symmetric generating sets~$S
  \subset G$ and $S' \subset G'$ of~$G$ and~$G'$ respectively with~$\beta_{G,S} = \beta_{G',S'}$.
\end{defi}

If $G$ is a finitely generated Abelian group, then the \emph{torsion
  subgroup~$\tors G$} of~$G$, i.e., the subgroup of all elements
of~$G$ of finite order, is a finite group. Moreover, the
quotient~$G/\tors G$ is a finitely generated free Abelian group and
the rank of~$G/\tors G$ is called the \emph{rank~$\rk G$ of~$G$}. In 
this situation, one has~$G \cong G/\tors G \times \tors G \cong 
\Z^{\rk G} \times \tors G$.

\begin{thm}[The case of symmetric generating systems]\label{thmsymmetric}
  Two finitely generated Abelian groups of non-zero rank admit equal
  growth functions with respect to symmetric generating sets if and
  only if they have the same rank and the torsion parts have the same
  parity.
\end{thm}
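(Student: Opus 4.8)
For necessity, suppose $\beta_{G,S} = \beta_{G',S'}$ for symmetric generating sets $S, S'$. Equality of the growth functions forces equality of the growth rates, and since the growth rate of a finitely generated Abelian group equals its rank~\cite[Chapter~VI]{delaharpe}, I obtain $\rk G = \rk G'$. For the parity I would use the involution $g \mapsto g^{-1}$: since $S$ is symmetric it preserves $d_S(\,\cdot\,,e)$ and hence acts on each ball $B_{G,S}(r)$; its fixed points are exactly the elements with $g^2 = e$, i.e.\ the $2$-torsion $\{0\} \times (\tors G)[2]$, while all other elements pair off. Thus $\beta_{G,S}(r) \equiv |\{g : g^2 = e\}| \pmod 2$ for all sufficiently large $r$, and this count is odd precisely when $|\tors G|$ is odd. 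Equality of the two growth functions therefore forces $|\tors G|$ and $|\tors G'|$ to have the same parity, which is the parity statement already recorded in~\cite{mathoverflow, loeh}.

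For sufficiency, write $G \cong \Z^n \times T$ and $G' \cong \Z^n \times T'$ with $n \geq 1$ and $t := |T| \equiv |T'| =: t' \pmod 2$. First I would reduce to the case $n = 1$: if symmetric sets on $\Z \times T$ and $\Z \times T'$ already yield equal growth, then adjoining the standard generators $\pm e_2, \dots, \pm e_n$ of the remaining $\Z^{n-1}$ to both sides multiplies both spherical growth series by the \emph{same} factor, preserving the equality. For $n = 1$ I would work with direct-product generating sets $S_0 \sqcup S_T$, for which the spherical growth series factor as $\Sigma_{\Z, S_0}(x) \cdot \Sigma_{T, S_T}(x)$, where $\Sigma_{H,R}(x) := \sum_{r} \sigma_{H,R}(r)\, x^r$. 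Taking $S_T = T \setminus \{e\}$ gives the torsion factor $1 + (t-1)x$, and the crux is then to realize suitable free factors on the $\Z$-side.

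The key computation is the growth series of two families of generating sets of $\Z$: the interval $\{\pm 1, \dots, \pm M\}$, whose ball of radius $r$ is $[-Mr, Mr]$ and which yields $\Sigma(x) = (1 + (2M-1)x)/(1-x)$; and, for odd $t$, the ``all-odd'' set $\{\pm 1, \pm 3, \dots, \pm t\}$, whose word metric I would compute explicitly (representing an integer by odd steps and reducing modulo the largest available step) to obtain $\Sigma(x) = (1+x)(1 + (t-1)x)/(1-x)$. The argument then splits on parity and matches the two product series as polynomials. For even $t, t'$ I take interval free sets of radii $t'/2$ on $G$ and $t/2$ on $G'$, so that both spherical growth series equal $(1 + (t-1)x)(1 + (t'-1)x)/(1-x)$. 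For odd $t, t'$ I take the ``all-odd'' free sets up to $t'$ on $G$ and up to $t$ on $G'$, so that both equal $(1+x)(1 + (t-1)x)(1 + (t'-1)x)/(1-x)$. In each case the two numerators coincide because multiplication of the linear factors commutes, and the denominator is shared; undoing the rank reduction multiplies both by $((1+x)/(1-x))^{n-1}$.

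I expect the main obstacle to be the parity dichotomy itself, encapsulated in the fact that a free generating set contributes a numerator factor whose relevant coefficient has a fixed parity: an interval set can only supply the odd coefficient $2M - 1$, so it matches even-order torsion (where $t - 1$ is odd) but fails for odd-order torsion. This is exactly why the second, ``all-odd'' family is needed — it supplies the extra factor $(1+x)$ that absorbs the parity. Verifying the growth series of the ``all-odd'' sets through the explicit distance formula is the one genuinely computational step; everything else is bookkeeping with the product formula for spherical growth series.
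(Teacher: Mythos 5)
Your proposal is correct and takes essentially the same approach as the paper: the necessity direction is the same growth-rate plus inversion-involution argument, and your two witness families --- interval sets for even torsion and ``all-odd'' sets for odd torsion, with the parameters swapped between the two groups --- are precisely the paper's Examples~\ref{exaeven} and~\ref{exaodd}. Your generating-series bookkeeping is only a cosmetic repackaging of the paper's explicit spherical growth computations and the observed symmetry in the two parameters.
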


\begin{thm}[The case of monoid generating systems]\label{thmmonoid}
  Two finitely generated Abelian groups of non-zero rank admit 
  equal growth functions (with respect to monoid generating sets) if 
  and only if they have the same rank.
\end{thm}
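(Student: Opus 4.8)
The two implications require quite different work; the forward one is immediate and the reverse is the substance. For the forward implication, suppose $\beta_{G,S} = \beta_{G',S'}$. The rank of a finitely generated Abelian group equals its degree of polynomial growth~\cite[Chapter~VI]{delaharpe}, and this degree is read off from the asymptotics of the common growth function; hence $\rk G = \rk G'$.

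For the reverse implication, write $G \cong \Z^d \times T$ and $G' \cong \Z^d \times T'$ with $d = \rk G = \rk G' \geq 1$ and $T = \tors G$, $T' = \tors G'$ finite. The plan is to build monoid generating sets of \emph{product type}. If $S_0 \subset \Z^d$ generates the free part and $S_T \subset T$ generates the torsion, each as a monoid, and we set $S := (S_0 \times \{0\}) \cup (\{0\} \times S_T)$, then, since the two blocks move disjoint coordinates of the direct product, the word metric splits additively, $d_S\bigl((v,t),e\bigr) = d_{S_0}(v) + d_{S_T}(t)$ (this uses only that the generators act on disjoint coordinates, so it is valid for non-symmetric monoid sets). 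Encoding a growth function by the power series $\widehat\beta_{H,U}(x) := \sum_{r \geq 0} \beta_{H,U}(r)\, x^r$, this additivity becomes the product formula $\widehat\beta_{G,S}(x) = N_T(x)\cdot \widehat\beta_{\Z^d, S_0}(x)$, where $N_T(x) := \sum_{t \in T} x^{d_{S_T}(t)}$ records the torsion sphere sizes and satisfies $N_T(1) = |T|$. I would then make $S_0$ itself a product, using a single ``active'' generating set $S_1 \subset \Z$ on the first free coordinate and the fixed standard set on the remaining $\Z^{d-1}$, which factors $\widehat\beta_{\Z^d, S_0}(x) = A(x)\, C(x)$ with $A = \widehat\beta_{\Z, S_1}$ and $C$ the common contribution of $\Z^{d-1}$ (note $C(0) = 1$, so $C$ is invertible in $\Z[[x]]$). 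Choosing the same standard set on both groups, the desired identity $\beta_{G,S} = \beta_{G',S'}$ collapses, after cancelling $C$, to the purely rank-one equation $N_T(x)\, A(x) = N_{T'}(x)\, A'(x)$. Writing $A = Q/(1-x)^2$ and $A' = Q'/(1-x)^2$, this is the polynomial identity $N_T(x)\, Q(x) = N_{T'}(x)\, Q'(x)$, which is solved symmetrically by $Q := N_{T'}(x)\, R(x)$ and $Q' := N_T(x)\, R(x)$ for any common polynomial $R$, both sides then equalling $N_T(x)\,N_{T'}(x)\,R(x)$.

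It remains to realize these prescribed numerators as genuine growth functions of monoid generating sets of $\Z$, and this is where I expect the real difficulty to lie. The prototype is $S_1 = \{a, -b\} \subset \Z$ with $a + b = m'$ and $\gcd(a,b) = 1$: the radius-$r$ sphere $\{ja - (r-j)b : 0 \leq j \leq r\}$ lies entirely in the residue class $-rb$ modulo $m'$, and since these classes are distinct for $r = 0, \dots, m'-1$ no collisions occur before radius $m'$, giving $\sigma_{\Z, S_1}(r) = \min(r+1, m')$ and hence $A(x) = (1 + x + \cdots + x^{m'-1})/(1-x)^2 = N_{\Z/m'}(x)/(1-x)^2$, exactly the shape required (with $R = 1$) when $T'$ is cyclic. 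The main obstacle is to extend this to the general numerators $N_{T'}(x)\,R(x)$: for non-cyclic torsion $N_{T'}$ is a product $\prod_i (1 + x + \cdots + x^{d_i - 1})$ over the invariant factors, and one must realize such products---and their multiples by the ``speed-boosting'' factor $R$---exactly, matching not merely the eventual linear part but the entire initial profile of the sphere sequence. The factor $R$ cannot always be trivial: any monoid generating set of $\Z$ has $\sigma(r) \geq 2$ for $r \geq 1$, since both extreme reachable points are new at each radius, so $A(x) = 1/(1-x)^2$ is not realizable, and in the degenerate case where one torsion part is trivial one must take $R$ with $R(1) \geq 2$. Once this realizability lemma is in place, substituting the resulting active sets, the common standard set on $\Z^{d-1}$, and the product torsion sets yields monoid generating sets with $\beta_{G,S} = \beta_{G',S'}$, which proves the reverse implication.
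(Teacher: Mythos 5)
Your forward implication is fine (same as the paper's: the rank is the degree of polynomial growth), and your structural reductions are correct as far as they go --- the additive splitting of the word metric for product-type sets is exactly Proposition~\ref{propproductgrowth}, and cancelling the common $\Z^{d-1}$-factor in $\Z[[x]]$ is legitimate. But the proof is incomplete, and the missing piece is precisely the substance of the theorem. Your scheme reduces everything to a \emph{realizability lemma}: for some common polynomial $R$, both $N_{T'}(x)R(x)/(1-x)^2$ and $N_T(x)R(x)/(1-x)^2$ must occur as growth series of monoid generating sets of $\Z$, with the \emph{entire} initial sphere profile matched exactly, not just the eventual linear rate. You prove this only for the prototype $\{a,-b\}$, which realizes $N_{\Z/m'}(x)/(1-x)^2$; as you note yourself, this does not even settle the case of cyclic versus trivial torsion, since there $R=1$ is inadmissible on the trivial side ($1/(1-x)^2$ is not realizable), and everything beyond that --- non-cyclic torsion, the choice of $R$, the exact matching --- is declared ``the main obstacle'' and left open. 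Without that lemma there is no proof of the reverse implication. Moreover, the obstacle is not incidental but is created by your product ansatz: once $S$ splits as $(S_0\times\{0\})\cup(\{0\}\times S_T)$, the torsion factor $N_T(x)$ with $N_T(1)=|T|$ is frozen into the growth series and must be compensated entirely inside $\Z$. Indeed, the paper's remark on the system $x_1+y_1=x_2+y_2$, $x_1\cdot y_1=x_2\cdot y_2$ (whose only solutions are multiset equality $\{x_1,y_1\}=\{x_2,y_2\}$) shows that with the simplest building blocks (sphere sequences $1,k,k,\dots$, i.e.\ numerators $1+(k-1)x$) the only way to match different torsion sizes is the swap $k_1=|T'|$, $k_1'=|T|$, which breaks down as soon as one torsion part is trivial; matching $\Z$ against $\Z\times\Z/2$ already forces a genuinely new construction such as your sphere profile $1,3,4,4,\dots$.

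The paper avoids all of this by abandoning the product structure. In Example~\ref{examonoid} it takes $S=\{(-1,e)\}\cup\bigl(\{0,\dots,k-1\}\times F\bigr)$ in $\Z\times F$, a set that deliberately mixes the free and torsion coordinates; the balls are then computed exactly, $B_{G,S}(r)=\{(-r,e)\}\cup\bigl(\{-(r-1),\dots,r\cdot(k-1)\}\times F\bigr)$, so the spheres are constant equal to $|F|\cdot k$ from radius $1$ on. Choosing $k_j=K/|F_j|$ for a common multiple $K$ of the torsion orders makes all spherical growth functions equal to $1,K,K,\dots$ simultaneously --- for any number of groups of the same rank --- and Proposition~\ref{propproductgrowth} lifts this to rank $d$. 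That single example is the whole sufficiency proof. So the honest status of your argument is a reduction of Theorem~\ref{thmmonoid} to your realizability lemma; proving that lemma in the required generality looks at least as hard as the theorem itself, whereas giving up product generating sets makes the problem essentially trivial.
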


Moreover, we observe that the size of the torsion part is a lower 
bound for the ratio between any growth function and the standard 
growth functions of the corresponding free part:

\begin{defi}[Standard growth functions]\label{defstandardgrowth}
  Let $d \in \N$, let $E_d \subset \Z^d$ be the standard basis, 
  and let $v_d := (-1,\dots,-1) \in \Z^d$. Then $E_d \cup (-E_d)$ 
  is a finite symmetric generating set of~$\Z^d$ and $E_d \cup \{v_d\}$ 
  is a finite monoid generating set of~$\Z^d$. We write 
  \begin{align*}
    \beta_d  := \beta_{\Z^d, E_d \cup (-E_d)}
    \quad\text{and}\quad
    \beta^+_d  := \beta_{\Z^d, E_d \cup \{v_d\}}.
  \end{align*}
\end{defi}

\begin{prop}[Minimal growth]\label{propmingrowth}
  Let $G$ be a finitely generated Abelian group.
  \begin{enumerate}
    \item If $S$ is a finite symmetric generating set of~$G$, then
      \[ \limsup_{r \rightarrow \infty} 
      \frac{\beta_{G,S}(r)}{\beta_{\rk G}(r)} 
      \geq \bigl|\tors(G)\bigr|.
      \]
    \item If $S$ is a finite monoid generating set of~$G$, then 
      \[ \limsup_{r \rightarrow \infty} 
      \frac{\beta_{G,S}(r)}{\beta^+_{\rk G}(r)} 
      \geq \bigl|\tors(G)\bigr|.
      \]
  \end{enumerate}
\end{prop}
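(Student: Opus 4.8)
The plan is to prove both parts in parallel by pushing the problem forward along the projection $\pi \colon G \to G/\tors G \cong \Z^d$, where $d := \rk G$, and then reducing everything to a statement purely about $\Z^d$. Identify $G \cong \Z^d \times \tors G$, let $\pi$ be the projection onto the first factor, and write $S' := \pi(S)$; as $\pi$ is a surjective homomorphism, $S'$ is a finite symmetric (resp.\ monoid) generating set of~$\Z^d$. Since $S$ generates $G$, the constant $C := \max_{t \in \tors G} d_S\bigl((0,t),e\bigr)$ is finite. The key geometric observation is that deep fibres are filled completely: if $x \in \Z^d$ satisfies $d_{S'}(x,0) = \ell \le r - C$, then lifting a shortest $S'$-word for $x$ to a word in $S$ produces an element $g = (x,t_0) \in B_{G,S}(\ell)$, and for every $t \in \tors G$ the triangle inequality for $d_S$ gives $d_S\bigl((x,t),e\bigr) \le d_S(g,e) + d_S\bigl((0,t - t_0),e\bigr) \le \ell + C \le r$. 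Hence the whole fibre $\{x\}\times \tors G$ lies in $B_{G,S}(r)$, and since distinct fibres are disjoint of size $\lvert\tors G\rvert$ we obtain
\[ \beta_{G,S}(r) \ \geq\ \bigl|\tors G\bigr|\cdot \beta_{\Z^d, S'}(r-C). \]

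It then remains to compare $\beta_{\Z^d,S'}$ with the standard growth functions. Both $\beta_d$ and $\beta^+_d$ are polynomials of degree~$d$ (Ehrhart polynomials of the cross-polytope and of the simplex $\operatorname{conv}(E_d\cup\{v_d\})$), so the shift by~$C$ is asymptotically harmless, i.e.\ $\beta_d(r-C)/\beta_d(r)\to 1$ and $\beta^+_d(r-C)/\beta^+_d(r)\to 1$. Thus both parts follow once I establish the two pure-$\Z^d$ estimates: $\beta_{\Z^d,S'}(r) \geq \beta_d(r)$ for every finite symmetric generating set~$S'$, and $\limsup_{r\to\infty} \beta_{\Z^d,S'}(r)/\beta^+_d(r) \geq 1$ for every finite monoid generating set~$S'$.

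For the symmetric case I would argue directly and avoid volumes entirely. As $S'$ generates $\Z^d$ it spans $\R^d$, so it contains linearly independent vectors $v_1,\dots,v_d$, and by symmetry also $-v_1,\dots,-v_d$. For every $(a_1,\dots,a_d)\in\Z^d$ with $\sum_i\lvert a_i\rvert\le r$ the element $\sum_i a_i v_i$ is represented by an $S'$-word of length $\sum_i\lvert a_i\rvert\le r$, and the map $(a_i)\mapsto\sum_i a_iv_i$ is injective. Since the number of such tuples is exactly $\beta_d(r)$, this already yields $\beta_{\Z^d,S'}(r)\geq\beta_d(r)$ for all~$r$, and part~(1) follows.

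The monoid case is the main obstacle, and this is where volume enters. I would use the classical fact that for a finite monoid generating set $S'$ of~$\Z^d$ the rescaled balls $\tfrac1r B_{\Z^d,S'}(r)$ converge to $\operatorname{conv}(S')$, so that $\beta_{\Z^d,S'}(r)\sim\operatorname{vol}\bigl(\operatorname{conv}(S')\bigr)\,r^d$; in particular $\beta^+_d(r)\sim\operatorname{vol}\bigl(\operatorname{conv}(E_d\cup\{v_d\})\bigr)\,r^d=\tfrac{d+1}{d!}\,r^d$. The ratio $\beta_{\Z^d,S'}(r)/\beta^+_d(r)$ then has a genuine limit, so it suffices to prove the sharp bound $\operatorname{vol}\bigl(\operatorname{conv}(S')\bigr)\geq\tfrac{d+1}{d!}$. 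Here the monoid-generation hypothesis is used to see that $S'$ positively spans $\R^d$ (otherwise a nonzero functional would be nonnegative on the generated monoid~$\Z^d$), so that the origin lies in the interior of the lattice polytope $P:=\operatorname{conv}(S')$. The crux is the resulting lattice-polytope estimate: decomposing $P$ into the pyramids with apex~$0$ over its facets gives $d!\operatorname{vol}(P)=\sum_F c_F\cdot V_F$, summed over the facets~$F$ of~$P$, where $c_F\geq 1$ is the integral lattice distance from~$0$ to the affine hull of~$F$ and $V_F\geq 1$ is the normalized $(d-1)$-dimensional lattice volume of the lattice polytope~$F$; since any $d$-dimensional polytope has at least $d+1$ facets, $d!\operatorname{vol}(P)\geq d+1$. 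Combining this with the reduction completes part~(2); I expect verifying the pyramid volume identity (and hence the bound $d!\operatorname{vol}(P)\geq d+1$) together with the limit-shape input to be the technically most delicate point.
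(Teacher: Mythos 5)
Your reduction step and your symmetric case are essentially the paper's own argument: your fibre-filling inequality $\beta_{G,S}(r)\geq\lvert\tors G\rvert\cdot\beta_{\Z^d,\pi(S)}(r-C)$ is exactly Lemma~\ref{lemredtofree}, and your linear-independence injection is exactly part~(1) of Lemma~\ref{lemfree}. Where you genuinely diverge is the monoid case. The paper proves the \emph{pointwise} inequality $\beta_{\Z^d,S'}\geq\beta^+_d$ (part~(2) of Lemma~\ref{lemfree}) by constructing an explicit injection $\varphi\colon\Z^d\longrightarrow\Z^d$ that maps $E_d\cup\{v_d\}$-balls into $S'$-balls of the same radius, using a linearly independent subset $E\subset S'$ and, for each $E$-coordinate, an element of~$S'$ minimising that coordinate; the shift $r\mapsto r-C$ is then absorbed via Proposition~\ref{propliminf}. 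You instead pass to asymptotics, $\beta_{\Z^d,S'}(r)\sim\operatorname{vol}\bigl(\operatorname{conv}(S')\bigr)\,r^d$ and $\beta^+_d(r)\sim\tfrac{d+1}{d!}\,r^d$, and reduce to the sharp lattice-geometry bound $d!\operatorname{vol}(P)\geq d+1$ for lattice polytopes~$P$ with~$0$ in the interior. That bound is correct as you sketch it: for each facet~$F$ with primitive outer normal~$u_F$, the pyramid over~$F$ with apex~$0$ has volume $h_F V_F/d!$ (using that the covolume of $\Z^d\cap u_F^{\perp}$ equals $\lvert u_F\rvert$), where the support value $h_F\geq 1$ and the normalised facet volume $V_F\geq 1$ are integers, and a $d$-polytope has at least $d+1$ facets; equality holds precisely for configurations like $\operatorname{conv}(E_d\cup\{v_d\})$. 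The paper's route buys a stronger, completely elementary statement valid for every radius; your route buys conceptual clarity (minimal growth equals minimal volume, and explains why $\beta^+_d$ is extremal) and a genuine limit rather than a limsup.

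One step you should not wave through as classical in the form you stated it: Hausdorff convergence of the rescaled balls $\tfrac1r B_{\Z^d,S'}(r)$ to $P:=\operatorname{conv}(S')$ does \emph{not} by itself imply $\beta_{\Z^d,S'}(r)\sim\operatorname{vol}(P)\,r^d$ --- a sequence of subsets can Hausdorff-converge to~$P$ while containing only, say, the even lattice points, halving the count $2^d$-fold. What you actually need is the inner approximation $B_{\Z^d,S'}(r)\supseteq\bigl((r-C_0)P\bigr)\cap\Z^d$ for some constant~$C_0$, together with the outer containment $B_{\Z^d,S'}(r)\subseteq rP$ (which itself uses $0\in P$, justified by your positive-spanning remark). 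The inner approximation is true and provable by rounding: write a lattice point $x\in(r-C_0)P$ as $x=\sum_{s\in S'}\lambda_s s$ with $\lambda_s\geq 0$ and $\sum_s\lambda_s\leq r-C_0$, replace each $\lambda_s$ by $\lfloor\lambda_s\rfloor$ to get an $S'$-word whose value differs from~$x$ by a lattice vector from a fixed finite set, and absorb that difference by finitely many further generators, which is possible because $S'$ generates $\Z^d$ as a monoid. With this argument (or a precise citation of it) supplied, your proof is sound.
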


As a consequence we obtain that only finitely many isomorphism types
of finitely generated Abelian groups can share a single growth
function and that the size of the torsion part can be recovered from
the set of \emph{all} growth functions of a finitely generated Abelian
group:

\begin{cor}[Finite ambiguity]\label{corfinite}
  Let $\beta \colon \N \longrightarrow \N$ be a function.
  Then there are at most finitely many 
  isomorphism types of finitely generated Abelian groups~$G$ that 
  have a finite monoid generating set~$S$ with~$\beta_{G,S} = \beta$.
\end{cor}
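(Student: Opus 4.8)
The plan is to prove Corollary~\ref{corfinite} by extracting the rank and an upper bound on the torsion size directly from the asymptotics of $\beta$, using Proposition~\ref{propmingrowth} as the essential tool.

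The plan is to read off from the single function $\beta$ both the common rank and a uniform upper bound on the size of the torsion part, and then to invoke the structure theorem for finitely generated Abelian groups. If no group realizes $\beta$, the assertion holds vacuously, so I would assume there is at least one finitely generated Abelian group $G$ with a finite monoid generating set $S$ satisfying $\beta_{G,S} = \beta$.

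First I would determine the rank. Since the degree of polynomial growth of $\beta_{G,S}$ equals $\rk G$ \cite[Chapter~VI]{delaharpe}, and this degree is visible in the asymptotics of $\beta$ alone (e.g.\ as $\lim_{r\to\infty}\log\beta(r)/\log r$), every group realizing $\beta$ shares one and the same rank $d$, which is determined by $\beta$. Consequently $\beta^+_d$ is a fixed function depending only on $\beta$, and because $\beta$ is the growth function of a finitely generated Abelian group of rank $d$ it has exact polynomial growth of degree $d$, so that $L := \limsup_{r\to\infty}\beta(r)/\beta^+_d(r)$ is finite.

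Next I would bound the torsion: applying Proposition~\ref{propmingrowth}(2) to any realizing pair $(G,S)$, and using $\beta_{G,S} = \beta$ together with $\rk G = d$, yields $|\tors(G)| \leq L$. Thus every group realizing $\beta$ is isomorphic to $\Z^d \times \tors(G)$ with $|\tors(G)| \leq L$; as there are only finitely many isomorphism types of finite Abelian groups of order at most the fixed number $L$, and $d$ is fixed, only finitely many isomorphism types can occur. The degenerate case $d = 0$ is handled directly, since then $\beta$ is eventually constant equal to the order of the (finite) group $G$. The only genuine subtlety is the finiteness of $L$, i.e.\ that $\beta$ cannot outgrow $\beta^+_d$; this is exactly where the sharp polynomial asymptotics of Abelian growth functions are used, rather than a coarse $O$-bound.
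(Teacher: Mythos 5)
Your proposal is correct and follows essentially the same route as the paper's proof: read off the rank $d$ from the polynomial growth rate (Proposition~\ref{proprankgrowth}), observe that $\limsup_{r\to\infty}\beta(r)/\beta^+_d(r)$ is finite by the two-sided polynomial bounds, and then bound $|\tors G|$ by this limit superior via Proposition~\ref{propmingrowth}, leaving only finitely many isomorphism types. The only (harmless) difference is that you handle the rank-zero case explicitly, which the paper leaves implicit.
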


\begin{cor}[Recognising the size of the torsion part from the set 
  of growth functions]\label{corgrowthset}
  Let $G$ and $G'$ be finitely generated Abelian groups.
  \begin{enumerate}
  \item
    If the sets of all growth functions of $G$ and $G'$ with respect
    to symmetric generating sets coincide, i.e.,
      \begin{align*} 
         & \; \{ \beta_{G,S} 
              \mid \text{$S$ is a finite symmetric generating set of~$G$}\}
         \\
       = & \; \{ \beta_{G',S'} 
              \mid \text{$S'$ is a finite symmetric generating set of~$G'$}\},
      \end{align*}
      then $\rk G = \rk G'$ and $|\tors G| = |\tors G'|$.
  \item
    If the sets of all growth functions of $G$ and $G'$ coincide, i.e., 
      \begin{align*} 
         & \; \{ \beta_{G,S} 
              \mid \text{$S$ is a finite monoid generating set of~$G$}\}
         \\
       = & \; \{ \beta_{G',S'} 
              \mid \text{$S'$ is a finite monoid generating set of~$G'$}\},
      \end{align*}
      then $\rk G = \rk G'$ and $|\tors G| = |\tors G'|$.
  \end{enumerate}
\end{cor}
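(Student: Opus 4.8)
The plan is to read off, from each of the two coinciding sets of growth functions, a single numerical invariant that turns out to equal the size of the torsion part. Write $d := \rk G$. The first step is to establish $\rk G = \rk G'$. The rank of a finitely generated Abelian group equals its growth rate \cite[Chapter~VI]{delaharpe}, and the growth rate $\lim_{r\to\infty}\log\beta(r)/\log r$ is already determined by a \emph{single} growth function $\beta$; since the two sets coincide and are nonempty, any common growth function yields $\rk G = \rk G'$. (If $\rk G = 0$ then all growth functions of $G$ are bounded, hence eventually equal to $|G|$, so coincidence of the sets forces $\rk G' = 0$ as well and $|G| = |G'|$; thus we may assume $d \geq 1$.) In particular the normalising functions agree, $\beta_{\rk G} = \beta_{\rk G'}$ in case~(1) and $\beta^+_{\rk G} = \beta^+_{\rk G'}$ in case~(2).

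For case~(1), define $m(G)$ to be the infimum of $\limsup_{r\to\infty}\beta(r)/\beta_{\rk G}(r)$ as $\beta$ ranges over all growth functions of $G$ with respect to finite symmetric generating sets. By construction $m(G)$ depends only on this set of functions and on $\rk G$, so the hypothesis of~(1) together with the rank equality gives $m(G) = m(G')$. It therefore suffices to prove $m(G) = |\tors G|$, for then $|\tors G| = m(G) = m(G') = |\tors G'|$. The lower bound $m(G) \geq |\tors G|$ is exactly Proposition~\ref{propmingrowth}(1).

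The key step is the matching upper bound, for which I would exhibit a symmetric generating set that is \emph{cheap on torsion}. Using $G \cong \Z^d \times \tors G$, put
\[ S_0 := \bigl( (E_d \cup (-E_d)) \times \{0\} \bigr) \cup \bigl( \{0\} \times \tors G \bigr), \]
a finite symmetric generating set. Since each generator either moves in a free direction or adjusts the torsion coordinate, and the minimal costs of the two tasks add, one gets $d_{S_0}((v,t),0) = \|v\|_1 + [t \neq 0]$, where $\|\cdot\|_1$ is word length in $E_d \cup (-E_d)$ (the $\ell^1$-norm) and $[t\neq 0]$ is $1$ if $t \neq 0$ and $0$ otherwise. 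Counting the ball accordingly yields
\[ \beta_{G,S_0}(r) = \beta_d(r) + \bigl(|\tors G| - 1\bigr)\,\beta_d(r-1). \]
Because $\beta_d$ is a polynomial of degree $d$ in $r$ (the Ehrhart count of the dilated $\ell^1$-ball), $\beta_d(r-1)/\beta_d(r) \to 1$, so $\limsup_{r\to\infty}\beta_{G,S_0}(r)/\beta_d(r) = |\tors G|$. Hence $m(G) \leq |\tors G|$, giving $m(G) = |\tors G|$ and the claim.

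Case~(2) is identical in structure: replace $\beta_{\rk G}$ by $\beta^+_{\rk G}$, use Proposition~\ref{propmingrowth}(2) for the lower bound, and take the monoid generating set
\[ S_0^+ := \bigl( (E_d \cup \{v_d\}) \times \{0\} \bigr) \cup \bigl( \{0\} \times \tors G \bigr), \]
for which the same cost decomposition gives $\beta_{G,S_0^+}(r) = \beta^+_d(r) + (|\tors G| - 1)\,\beta^+_d(r-1)$ and, since $\beta^+_d$ is again polynomial of degree $d$, limsup ratio $|\tors G|$. I expect the only real obstacle to be this upper-bound construction, i.e. checking that the torsion can be absorbed at asymptotically negligible cost so that the infimum defining $m(G)$ is actually \emph{attained} at $|\tors G|$ rather than at a strictly larger value; the rank equality and the lower bound are handed to us by the cited growth-rate result and by Proposition~\ref{propmingrowth}.
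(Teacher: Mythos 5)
Your proof is correct and follows essentially the same route as the paper: the witness generating set you use, $\bigl((E_d \cup (-E_d)) \times \{0\}\bigr) \cup \bigl(\{0\} \times \tors G\bigr)$ (resp.\ its monoid analogue with $v_d$), is exactly the paper's $E_d \cup (-E_d) \cup F$, and the matching lower bound is the same appeal to Proposition~\ref{propmingrowth}, applied to the group in whose growth set the transferred function lies. The only cosmetic differences are that you package the argument as an invariant $m(G)$ (an infimum of normalised limsups shown to equal $|\tors G|$) and compute the witness's growth function exactly -- where a crude bound $\beta_{G,S_0}(r) \leq |\tors G| \cdot \beta_d(r)$ by monotonicity of $\beta_d$ suffices and avoids your unproven claim that $\beta^+_d$ is a polynomial -- whereas the paper uses that bound directly and concludes by symmetry of the hypothesis.
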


However, the converse of Corollary~\ref{corgrowthset} does \emph{not}
hold (Example~\ref{exaconverse}).

In fact, we will prove the above results for the
slightly larger class of groups of type~$\Z^d \times F$, where $d \in
\N$ and $F$ is a finite group.

This paper is organised as follows: In Section~\ref{secbasics}, we
recall some basics about growth functions. In Section~\ref{secnec},
we deduce that the conditions given in Theorem~\ref{thmsymmetric}
and~\ref{thmmonoid} are necessary; conversely, in
Section~\ref{secsuff}, we present examples that show that these
conditions are also sufficient. In Section~\ref{secmin}, we discuss
minimal growth of finitely generated groups and prove
Proposition~\ref{propmingrowth}, as well as its consequences
Corollary~\ref{corfinite} and Corollary~\ref{corgrowthset}.

\section{Preliminaries on growth functions}\label{secbasics}

For the sake of completeness, we collect some basic facts about growth
functions of groups~\cite[Chapter~VI--VIII]{delaharpe}, in particular,
of groups that are products of finitely generated free Abelian groups
and a finite group.

\begin{prop}[Changing the generating set]\label{propgenset}
  Let $G$ be a finitely generated group, and let $S, T \subset G$ be
  finite monoid generating sets of~$G$. Then there exists~$C \in
  \N_{>0}$ such that for all~$r\in \N$ we have
  \[ \beta_{G,T}(r) \leq \beta_{G,S}(C\cdot r)
     \quad
     \text{and}
     \quad
     \beta_{G,S}(r) \leq \beta_{G,T}(C \cdot r).
  \]
\end{prop}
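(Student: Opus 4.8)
The plan is to reduce the statement about growth functions to a Lipschitz comparison of the two (in general non-symmetric) word metrics $d_S$ and $d_T$, and then to translate such a comparison into inclusions of balls. The point is that once we know a pointwise estimate of the form $d_S(g,e) \leq C\cdot d_T(g,e)$ and $d_T(g,e) \leq C \cdot d_S(g,e)$ for all $g \in G$, the inclusion of balls $B_{G,T}(r) \subset B_{G,S}(C\cdot r)$ (and symmetrically) is immediate, and counting elements yields the claimed inequalities between $\beta_{G,T}$ and $\beta_{G,S}$.

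First I would produce the constant. Since $S$ is a monoid generating set of~$G$, every element of the finite set~$T$ can be written as a product of elements of~$S$, so $d_S(t,e) < \infty$ for each~$t \in T$; symmetrically $d_T(s,e) < \infty$ for each~$s \in S$. I would therefore set
\[
  C := \max\Bigl( 1,\ \max_{t \in T} d_S(t,e),\ \max_{s \in S} d_T(s,e) \Bigr) \in \N_{>0},
\]
taking the explicit~$1$ so that $C$ is positive even in degenerate cases (e.g.\ when $G$ is trivial, where both sides are constantly~$1$). Next I would establish the metric comparison: if $d_T(g,e) = n$, choose a shortest $T$-word $g = t_1 \cdots t_n$ with $t_i \in T$; replacing each~$t_i$ by a shortest $S$-word of length $d_S(t_i,e) \leq C$ realises $g$ as a product of at most $C\cdot n$ elements of~$S$, whence $d_S(g,e) \leq C \cdot d_T(g,e)$. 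The same argument with the roles of $S$ and $T$ exchanged gives $d_T(g,e) \leq C \cdot d_S(g,e)$.

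Finally I would translate back: if $g \in B_{G,T}(r)$, then $d_T(g,e) \leq r$, hence $d_S(g,e) \leq C\cdot r$, so $g \in B_{G,S}(C\cdot r)$; thus $B_{G,T}(r) \subset B_{G,S}(C\cdot r)$ and taking cardinalities gives $\beta_{G,T}(r) \leq \beta_{G,S}(C\cdot r)$, with the reverse inequality following symmetrically. The only point that requires a little care — and the closest thing to an obstacle — is the asymmetry of the word metrics: one must substitute the $S$-words for the letters $t_i$ \emph{in the given order} so as not to disturb the product $t_1 \cdots t_n$, but since we only bound the total length this causes no difficulty, and the estimate is insensitive to whether $S$ or $T$ is symmetric.
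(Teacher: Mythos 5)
Your proposal is correct and follows essentially the same route as the paper: bound $d_S(t,e)$ for $t\in T$ and $d_T(s,e)$ for $s\in S$ using finiteness of the generating sets, rewrite minimal-length words letter by letter to get the bi-Lipschitz estimates $d_S(g,e)\leq C\cdot d_T(g,e)$ and $d_T(g,e)\leq C\cdot d_S(g,e)$, and then pass to ball inclusions. Your write-up merely makes explicit the details (the choice of $C$, the in-order substitution, the degenerate case) that the paper's proof leaves implicit.
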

\begin{proof}
  Because $S$ and $T$ are finite, the sets $\{ d_S(t,e) \mid t \in
  T\}$ and $\{ d_T(s,e) \mid s \in S\}$ are finite and so have finite
  upper bounds. Rewriting minimal length representations in one
  generating system in terms of the other one shows that there is a
  constant~$C \in \N_{>0}$ such that for all~$g \in G$ we have
  \[ d_S(g,e) \leq C \cdot d_T(g,e)
     \quad\text{and}\quad
     d_T(g,e) \leq C \cdot d_S(g,e),
  \]
  from which the claim follows.
\end{proof}

\begin{prop}[Polynomial growth rate and rank]\label{proprankgrowth}\label{propliminf}
  Let $d \in \N_{>0}$, let $F$ be a finite group, and let $G \cong
  \Z^d \times F$. Let $S \subset G$ be a finite monoid generating set
  of~$G$.
    Then for all~$r\in \N$ we have
    \[ \frac1C \cdot r^d \leq \beta_{G,S}(r) \leq C \cdot r^d.
    \]
    Consequently,
    \[ \limsup_{r\rightarrow \infty} 
       \frac{\beta_{G,S}(r-R)}{\beta_{G,S}(r)} = 1.
    \]
\end{prop}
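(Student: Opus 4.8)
The plan is to first prove the two-sided bound $\frac1C r^d \le \beta_{G,S}(r) \le C r^d$ (for $r \ge 1$, with an implicit constant $C \in \N_{>0}$; the case $r = 0$ is degenerate) for one conveniently chosen generating set, then transport it to an arbitrary finite monoid generating set $S$ via Proposition~\ref{propgenset}, and finally deduce the ``Consequently'' statement. The reduction to a single generating set is legitimate because the property $\frac1C r^d \le \beta \le C r^d$ is invariant under replacing the radius $r$ by a fixed linear rescaling $r \mapsto C'r$, since $(C'r)^d = (C')^d r^d$; and Proposition~\ref{propgenset} controls the growth functions of any two finite monoid generating sets against each other by exactly such a rescaling. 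Monotonicity of $\beta_{G,T}$ is then used to smooth out the rescaling at small radii.

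For the anchor computation I would take the generating set $S_0 := \bigl((E_d \cup \{v_d\}) \times \{e\}\bigr) \cup \bigl(\{0\} \times F\bigr)$ of $G \cong \Z^d \times F$, which is a finite monoid generating set since $E_d \cup \{v_d\}$ generates $\Z^d$ as a monoid (Definition~\ref{defstandardgrowth}) and $\{0\} \times F$ generates the $F$-factor. The projection $G \to \Z^d$ sends each letter of $S_0$ to a letter of $E_d \cup \{v_d\}$ or to $0$, so $d_{S_0}\bigl((x,f),e\bigr) \geq d_{\Z^d}(x)$; conversely a minimal $\Z^d$-word for $x$, paired with $e$, followed by a single letter $(0,f)$ reaches $(x,f)$, so $d_{S_0}\bigl((x,f),e\bigr) \leq d_{\Z^d}(x) + 1$. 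This sandwiches the balls as $B_{\Z^d}(r-1) \times F \subseteq B_{G,S_0}(r) \subseteq B_{\Z^d}(r) \times F$, whence
\[
  |F| \cdot \beta_{\Z^d}(r-1) \;\leq\; \beta_{G,S_0}(r) \;\leq\; |F| \cdot \beta_{\Z^d}(r).
\]
Combining this with the standard lattice-point count $\beta_{\Z^d}(r) = \Theta(r^d)$ for $\Z^d$ (e.g.\ via the $\ell^1$-ball cardinality $\sum_{j=0}^{d} \binom{d}{j} 2^j \binom{r}{j}$ for the symmetric generating set, transported to $E_d \cup \{v_d\}$ by Proposition~\ref{propgenset} applied inside $\Z^d$) yields $\beta_{G,S_0} = \Theta(r^d)$.

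Finally, for the limsup statement I would first note that $\beta_{G,S}$ is non-decreasing (the balls are nested), so $\beta_{G,S}(r-R) \leq \beta_{G,S}(r)$ and the ratio is always at most $1$, giving $\limsup \leq 1$. The reverse inequality is the main obstacle, since naively inserting the two-sided bound only yields $\frac{\beta_{G,S}(r-R)}{\beta_{G,S}(r)} \geq \frac{(r-R)^d}{C^2 r^d} \to C^{-2}$, a positive constant rather than $1$. I would instead argue by contradiction: if $\limsup < 1$, there are $\theta < 1$ and $r_0$ with $\beta_{G,S}(r) \geq \theta^{-1}\,\beta_{G,S}(r-R)$ for all $r \geq r_0$; iterating along the arithmetic progression $r_0 + mR$ forces $\beta_{G,S}(r_0 + mR) \geq \theta^{-m}\,\beta_{G,S}(r_0)$, an exponential lower bound in $m$. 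This contradicts the polynomial upper bound $\beta_{G,S}(r_0 + mR) \leq C\,(r_0 + mR)^d$ from the first part once $m$ is large, so $\limsup \geq 1$ and hence the limsup equals $1$.
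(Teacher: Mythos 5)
Your proof is correct and follows essentially the same route as the paper's: establish the polynomial bounds for one concrete anchor generating set (the paper uses the $\ell^\infty$-sphere $T = \{g \in \Z^d \mid |g|_\infty = 1\}$ together with $T \cup F$, you use $E_d \cup \{v_d\}$ together with $\{0\} \times F$), sandwich the growth of $G$ between $\beta_{\Z^d}$ and $|F|\cdot\beta_{\Z^d}$, transport to an arbitrary $S$ via Proposition~\ref{propgenset}, and deduce the limsup statement from monotonicity plus the observation that a limsup strictly below $1$ would force exponential growth. Your explicit iteration along the progression $r_0 + mR$ is precisely the exponential-growth contradiction that the paper's proof leaves implicit.
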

\begin{proof}
  The finite set $T := \{g \in \Z^d \mid |g|_\infty = 1\}$ generates
  the additive monoid~$\Z^d$ and $d_T$ coincides with the
  $\infty$-metric. Now a simple counting argument shows the
  first part for~$\beta_{\Z^d,T}$. 
  For the finite monoid generating set~$T \cup F$ of~$G$ we clearly have
  \[ \beta_{\Z^d,T} \leq \beta_{G, T \cup F} \leq \beta_{\Z^d,T} \cdot |F|,
  \]
  so the first part holds also for~$\beta_{G,T\cup F}$.
  Proposition~\ref{propgenset} translates this into
  corresponding estimates for the monoid generating set~$S$ of~$G$.

  It follows from the first part in particular that the limes superior 
  in the second part indeed exists. Because $\beta_{G,S}$ is monotonically 
  increasing, the limes superior is at most~$1$; if the limes superior 
  were stricly less than~$1$, then $\beta_{G,S}$ would be growing exponentially, 
  contradicting the first part.
\end{proof}


\begin{prop}[Growth in product groups]\label{propproductgrowth}
  Let $G_1$ and $G_2$ be finitely generated groups, and let $S_1 \subset G_1$ 
  and $S_2 \subset G_2$ be finite monoid generating sets of~$G_1$ and $G_2$ 
  respectively. 
  Then $S := (S_1 \times \{e\}) \cup (\{e\} \cup S_2)$ is a finite monoid 
  generating set of~$G := G_1 \times G_2$ and 
  for all~$r \in \N$ we have
  \[ \sigma_{G, S}(r) 
     = \sum_{r_1 =0}^r \sum_{r_2 = 0}^{r-r_1} 
       \sigma_{G_1,S_1}(r_1) \cdot \sigma_{G_2, S_2}(r_2).
  \]
\end{prop}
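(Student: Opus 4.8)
The plan is to deduce the growth formula from a single structural fact: in a direct product, word length splits additively across the two factors. First I would verify that $S = (S_1 \times \{e\}) \cup (\{e\} \times S_2)$ is a finite monoid generating set of~$G = G_1 \times G_2$. Finiteness is clear, and since $(g_1,g_2) = (g_1,e)\cdot(e,g_2)$ while each~$g_i$ is a product of elements of~$S_i$ (read through the obvious embedding into~$G$), every element of~$G$ is an $S$-word.

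The heart of the argument is the identity
\[ d_S\bigl((g_1,g_2),(e,e)\bigr) = d_{S_1}(g_1,e) + d_{S_2}(g_2,e) \]
for all~$(g_1,g_2) \in G$. The inequality ``$\leq$'' is immediate: concatenating a minimal $S_1$-representation of~$g_1$ (read in~$G_1 \times \{e\}$) with a minimal $S_2$-representation of~$g_2$ (read in~$\{e\} \times G_2$) represents~$(g_1,g_2)$ by a word of length~$d_{S_1}(g_1,e) + d_{S_2}(g_2,e)$. For ``$\geq$'', I would take any $S$-word of length~$n$ representing~$(g_1,g_2)$, with~$n_1$ letters of type~$(s,e)$ and~$n_2 = n - n_1$ letters of type~$(e,t)$. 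Because~$G$ is a direct product, the two coordinate projections are monoid homomorphisms, so projecting to~$G_1$ deletes the $(e,t)$-letters and leaves an $S_1$-word of length~$n_1$ representing~$g_1$, and similarly the second projection yields an $S_2$-word of length~$n_2$ representing~$g_2$. Hence $d_{S_1}(g_1,e) \leq n_1$ and $d_{S_2}(g_2,e) \leq n_2$, so their sum is at most~$n$; minimising over all representing words gives the claim.

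This lower bound is the only step where anything has to be checked, and I expect it to be the main (though still elementary) point: it is precisely the assertion that interleaving letters from the two factors can never do better than treating each factor separately, which holds exactly because the two coordinates of a direct product evolve independently.

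Given the distance identity, the growth computation is pure bookkeeping. An element~$(g_1,g_2)$ is completely classified, for counting purposes, by the pair $(r_1,r_2) := \bigl(d_{S_1}(g_1,e),\,d_{S_2}(g_2,e)\bigr)$, and by definition of the spherical growth functions there are exactly~$\sigma_{G_1,S_1}(r_1)$ choices for~$g_1$ and~$\sigma_{G_2,S_2}(r_2)$ choices for~$g_2$ realising these coordinate-distances, the two choices being independent. Partitioning the set of elements with~$d_S\bigl((g_1,g_2),(e,e)\bigr)$ in the prescribed range according to~$(r_1,r_2)$, whose admissible index set is~$\{(r_1,r_2) \mid r_1,r_2 \geq 0,\ r_1 + r_2 \leq r\}$, and summing the products~$\sigma_{G_1,S_1}(r_1)\cdot\sigma_{G_2,S_2}(r_2)$ over this set then yields the displayed formula.
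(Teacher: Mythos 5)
Your argument follows essentially the same route as the paper's proof: the paper's entire proof consists of asserting the identity $d_S\bigl((g_1,g_2),e\bigr) = d_{S_1}(g_1,e) + d_{S_2}(g_2,e)$ ``by definition of the word metric'' and noting that the counting formula follows. You supply what the paper leaves implicit --- the concatenation argument for ``$\leq$'' and the coordinate-projection argument for ``$\geq$'' --- and both are correct; the projection step is exactly the right way to see that interleaving letters cannot help. (You also silently corrected the misprint $(\{e\} \cup S_2)$ to $(\{e\} \times S_2)$ in the definition of~$S$, which is clearly what is intended.)

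There is, however, one point where your final paragraph equivocates, and it is worth making explicit. Under the distance identity, the \emph{sphere} of radius~$r$ in~$G$ is parametrised by pairs $(r_1,r_2)$ with $r_1 + r_2 = r$, so the correct sphere formula is the convolution
\[ \sigma_{G,S}(r) = \sum_{r_1=0}^{r} \sigma_{G_1,S_1}(r_1)\cdot\sigma_{G_2,S_2}(r-r_1), \]
whereas the double sum over the index set $\{(r_1,r_2) \mid r_1+r_2 \leq r\}$ printed in the proposition counts the \emph{ball} and equals $\beta_{G,S}(r)$. The statement as printed therefore equates a sphere count with a ball count and cannot be literally true (this is a misprint in the paper: its own applications in Examples~\ref{exaeven} and~\ref{exaodd} compute $\sigma_{G,S}$ via the convolution form). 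Your closing paragraph papers over this by partitioning ``the set of elements with $d_S$ in the prescribed range'' over the index set $\{r_1+r_2\leq r\}$ --- that computation yields $\beta_{G,S}(r)$, not the $\sigma_{G,S}(r)$ standing on the left of the displayed formula. Since your distance identity and independence-of-coordinates observation are correct, both true versions follow immediately; but you should either prove the convolution identity for $\sigma_{G,S}(r)$ (index set $r_1+r_2=r$) and flag the misprint, or state explicitly that the printed right-hand side is $\beta_{G,S}(r)$. As written, the last step claims to deliver a formula that your own partition does not produce.
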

\begin{proof}
  By definition of the word metric, for all~$(g_1, g_2) \in G$ we have
  \[ d_S\bigl((g_1, g_2), e\bigr) 
     = d_{S_1}(g_1, e) + d_{S_2}(g_2, e),
  \]
  which readily implies the stated decomposition of~$\sigma_{G,s}$.
\end{proof}

Moreover, we will use the following version of an observation by Hainke 
and Scheele~\cite{mathoverflow,loeh}:

\begin{prop}[Growth and elements of order~$2$]\label{propinv}
  Let $G$ be a finitely generated group and let $I \subset G$ be the
  set of elements of order at most~$2$. If $I$ is finite and $S \subset G$ 
  is a finite symmetric generating set of~$G$, then for all~$r \in \N$ 
  with $r > \diam_{d_s} I$ we have
  \[ \beta_{G,S}(r) \equiv |I| \mod 2. 
  \]
\end{prop}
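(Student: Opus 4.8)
The plan is to exploit the symmetry of $S$ through the inversion map $\iota \colon G \to G$, $g \mapsto g^{-1}$, which I claim acts on each ball as a parity-counting involution. The whole statement should then fall out of the elementary fact that a finite set carrying an involution has the same cardinality, modulo $2$, as its set of fixed points.

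First I would record that, because $S$ is symmetric, the word metric $d_S$ is symmetric and inversion preserves the distance to~$e$. Indeed, if $g = s_1 \cdots s_n$ with $s_1,\dots,s_n \in S$ is a word realising $d_S(g,e)$, then $g^{-1} = s_n^{-1} \cdots s_1^{-1}$ is a word of the same length all of whose letters again lie in $S$ (since $S = S^{-1}$); hence $d_S(g^{-1},e) \le d_S(g,e)$, and the reverse inequality follows by applying the same argument to $g^{-1}$. Thus $d_S(g^{-1},e) = d_S(g,e)$ for all $g \in G$, so $\iota$ is an involution of $G$ that maps each ball $B_{G,S}(r)$ bijectively onto itself.

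Next I would invoke the counting principle that the non-fixed points of an involution split into $2$-element orbits $\{g,g^{-1}\}$, while its fixed points are exactly the $g$ with $g = g^{-1}$, i.e.\ with $g^2 = e$, which is precisely the set~$I$. Applying this to the $\iota$-invariant finite set $B_{G,S}(r)$ gives
\[
  \beta_{G,S}(r) = \bigl|B_{G,S}(r)\bigr| \equiv \bigl|I \cap B_{G,S}(r)\bigr| \pmod 2.
\]
Finally I would use the hypothesis $r > \diam_{d_S} I$ to upgrade $|I \cap B_{G,S}(r)|$ to $|I|$: since $e \in I$, each $g \in I$ satisfies $d_S(g,e) \le \diam_{d_S} I < r$, so $g \in B_{G,S}(r)$, whence $I \subseteq B_{G,S}(r)$ and $|I \cap B_{G,S}(r)| = |I|$. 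Combining this with the congruence above yields $\beta_{G,S}(r) \equiv |I| \pmod 2$.

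The argument is essentially a single observation once the right involution is identified, so there is no deep obstacle; the only point demanding care is the verification that $\iota$ really preserves each ball, which rests entirely on the symmetry hypothesis $S = S^{-1}$ and would fail for a general monoid generating set. Everything else is the routine involution-counting parity computation together with the elementary containment $I \subseteq B_{G,S}(r)$.
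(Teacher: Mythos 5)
Your proposal is correct and follows essentially the same approach as the paper: the inversion map as an involution preserving each ball (using symmetry of~$S$), whose fixed points are exactly the elements of order at most~$2$, combined with the standard parity count and the containment $I \subset B_{G,S}(r)$ guaranteed by $r > \diam_{d_S} I$. Your write-up simply makes explicit the details that the paper's two-sentence proof leaves to the reader.
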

\begin{proof}
  The inversion map~$i \colon G \longrightarrow G$ satisfies~$i
  (B_{G,S}(r)) \subset B_{G,S}(r)$ (and hence $i (B_{G,S}(r)) =
  B_{G,S}(r)$) for all~$r \in \N$ because $S$ is symmetric. Because $i
  \circ i = \id_G$ and because the fixed points of~$i$ are precisely the
  elements of order at most~$2$ the claim follows.
\end{proof}

\section{Necessary conditions}\label{secnec}

The conditions given in Theorem~\ref{thmsymmetric} and
Theorem~\ref{thmmonoid} are necessary:

\begin{prop}[Necessary conditions]
  Let $d, d' \in \N$, let $F, F'$ be finite groups, and let $G \cong
  \Z^d \times F$, $G' \cong \Z^{d'} \times F'$. Moreover, let $S
  \subset G$ and $S' \subset G'$ be finite monoid generating sets
  with~$\beta_{G,S} = \beta_{G',S'}$.
  \begin{enumerate}
    \item 
      Then $d = d'$.
    \item 
      If $S$ and $S'$ are symmetric, then $|F| \equiv |F'| \mod 2$.
  \end{enumerate}
\end{prop}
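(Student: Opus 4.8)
The plan is to read off the rank from the polynomial growth rate and the parity of the torsion part from the parity of the growth function, so that each part becomes a direct application of the preliminary results of Section~\ref{secbasics}.

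For part~(1), I would first dispose of the degenerate case in which one of the ranks vanishes. If $d = 0$, then $G \cong F$ is finite, so $\beta_{G,S}$ is bounded; hence $\beta_{G',S'} = \beta_{G,S}$ is bounded as well. Since the lower bound $\beta_{G',S'}(r) \geq \frac1C \cdot r^{d'}$ of Proposition~\ref{proprankgrowth} applies as soon as $d' > 0$ and would force $\beta_{G',S'}$ to be unbounded, boundedness gives $d' = 0$; the symmetric argument handles the case $d' = 0$. If instead $d, d' > 0$, I would apply Proposition~\ref{proprankgrowth} to both groups to obtain constants $C, C' \in \N_{>0}$ with
\[ \frac1C \cdot r^d \leq \beta_{G,S}(r) = \beta_{G',S'}(r) \leq C' \cdot r^{d'} \]
for all $r \in \N$, together with the symmetric inequality obtained by exchanging the roles of the two groups. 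The displayed chain yields $r^{d - d'} \leq C \cdot C'$ for all~$r$, hence $d \leq d'$, and the symmetric inequality gives $d' \leq d$; therefore $d = d'$.

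For part~(2), assume that $S$ and $S'$ are symmetric. The key step is to identify the set of elements of order at most~$2$. In $G \cong \Z^d \times F$, an element $(z,f)$ has order at most~$2$ exactly when $2z = 0$ and $f^2 = e$; as $\Z^d$ is torsion-free this forces $z = 0$, so the set $I \subset G$ of such elements equals $\{0\} \times \{f \in F \mid f^2 = e\}$ and is finite. The second ingredient is the elementary fact that in any finite group the number of solutions of $f^2 = e$ has the same parity as the group order: pairing each element with its inverse fixes exactly the elements of order at most~$2$, so the remaining elements fall into pairs and $|F| \equiv |I| \pmod 2$, and likewise $|F'| \equiv |I'| \pmod 2$. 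Applying Proposition~\ref{propinv} to $G$ and to $G'$, there is an $r_0 \in \N$ exceeding both $\diam_{d_S} I$ and $\diam_{d_{S'}} I'$ such that for all $r > r_0$
\[ |F| \equiv |I| \equiv \beta_{G,S}(r) = \beta_{G',S'}(r) \equiv |I'| \equiv |F'| \pmod 2, \]
which gives $|F| \equiv |F'| \pmod 2$.

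Neither part presents a serious obstacle once the preliminaries are available; the only genuine idea is the parity computation in part~(2), namely recognising that the elements of order at most~$2$ in $\Z^d \times F$ are confined to the torsion factor and that their count detects $|F| \bmod 2$. The only care needed is to invoke Proposition~\ref{propinv} solely on the eventual range $r > r_0$, where the congruences for both groups hold simultaneously and the equality $\beta_{G,S} = \beta_{G',S'}$ can be used.
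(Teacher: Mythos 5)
Your proposal is correct and follows essentially the same route as the paper: part~(1) reads off the rank from the polynomial bounds of Proposition~\ref{proprankgrowth}, and part~(2) combines Proposition~\ref{propinv} applied to $\Z^d \times F$ with the fact that $|F|$ has the same parity as the number of elements of order at most~$2$. The only differences are cosmetic: you establish that parity fact by the direct pairing argument (the paper obtains it by applying Proposition~\ref{propinv} to the finite group $F$ itself), and you treat the rank-zero case explicitly, a degenerate case the paper leaves implicit since Proposition~\ref{proprankgrowth} assumes non-zero rank.
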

\begin{proof}
  It suffices to show that the rank and (in the symmetric case) the
  parity of the torsion part are encoded suitably in any growth
  function. The first part immediately follows from (the first part
  of) Proposition~\ref{proprankgrowth}. 

  The second part follows from Proposition~\ref{propinv}: On
  the one hand, applying Proposition~\ref{propinv} to~$F$ shows that
  the parity of~$|F|$ equals the parity of number of elements
  of order at most~$2$ in~$F$ (and thus in~$\Z^d \times F$). On the
  other hand, applying Proposition~\ref{propinv} to~$\Z^d \times F$
  shows that this number (and hence the parity of~$|F|$) is determined
  by the long-time behaviour of~$\beta_{G,S}$.
\end{proof}

\section{Sufficient conditions}\label{secsuff}

In view of the previous section, in order to prove
Theorems~\ref{thmsymmetric} and~\ref{thmmonoid} it remains to 
give examples of finite generating sets in the groups in question that 
witness that the corresponding groups admit equal growth functions.

\subsection{The symmetric case}

In the symmetric case, the following two examples will be at the heart of 
our arguments:

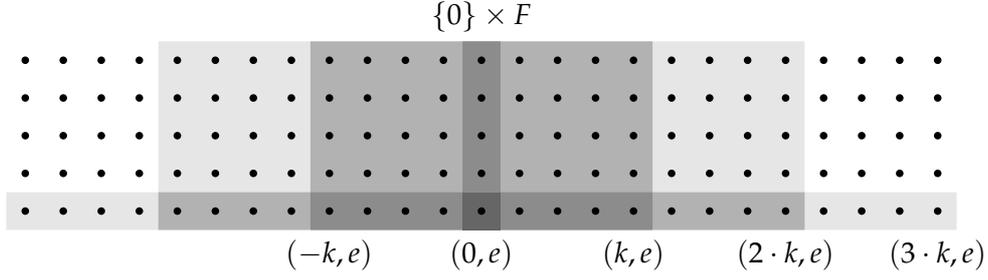
\begin{figure}
  \begin{center}
    \begin{tikzpicture}[x=0.5cm,y=0.5cm]
      \fill[black!10] (-12,0) -- (13,0) -- (13,1) 
                   -- (9,1) -- (9,5) -- (-8,5) -- (-8,1)
                   -- (-12,1) -- cycle;
      \fill[black!30] (-8,0) -- (9,0) -- (9,1) 
                   -- (5,1) -- (5,5) -- (-4,5) -- (-4,1)
                   -- (-8,1) -- cycle;
      \fill[black!45] (-4,0) -- (5,0) -- (5,1) 
                   -- (1,1) -- (1,5) -- (0,5) -- (0,1)
                   -- (-4,1) -- cycle;
      \fill[black!60] (0,0) rectangle +(1,1);
      \draw (0.5,5) node[anchor=south]{$\{0\} \times F$};
      \draw (0.5,0) node[anchor=north] {$(0,e)$};
      \draw (4.5,0) node[anchor=north] {$(k,e)$};
      \draw (8.5,0) node[anchor=north] {$(2\cdot k,e)$};
      \draw (12.5,0) node[anchor=north] {$(3 \cdot k,e)$};
      \draw (-3.5,0) node[anchor=north] {$(-k,e)$};
      \begin{scope}[shift={(0.5,0.5)}]
          \foreach \i in {-12,...,12}
          { \foreach \j in {0,...,4}
            { \gvertex{(\i,\j)}
            }
          }
      \end{scope}
    \end{tikzpicture}
  \end{center}

  \caption{Small balls in Example~\ref{exaeven}}
  \label{figeven}
\end{figure}

\begin{exa}\label{exaeven}
  Let $F$ be a finite group, let $G := \Z \times F$, let $k \in \N_{>0}$, and let 
  \[ S := \bigl( \{0\} \times F\bigr) 
          \cup
          \bigl( \{-k, \dots, k\} \times \{e\} \bigr)
          \subset G.
  \]
  Clearly, $S$ is a symmetric generating set of~$G$, and 
  Proposition~\ref{propproductgrowth} shows that
  \begin{align*}
    \sigma_{G,S} \colon \N & \longrightarrow \N \\
    r & \longmapsto 
    \begin{cases}
      1 & \text{if $r=0$} \\
      |F| - 1 + 2 \cdot k & \text{if $r=1$} \\
      (|F| - 1) \cdot 2 \cdot k + 2 \cdot k 
      = |F| \cdot 2 \cdot k & \text{if $r > 1$}
    \end{cases}
  \end{align*}
  (see also Figure~\ref{figeven}). 
  Notice that these terms are symmetric in~$|F|$ and $2 \cdot k$.
\end{exa}

\begin{exa}\label{exaodd}
  Let $F$ be a finite group, let $G := \Z \times F$, let $k \in \N_{>0}$, and let 
  \[ S := \bigl( \{0\} \times F\bigr) 
          \cup
          \bigl( \{ 2 \cdot j +1 \mid j \in \{-k, \dots, k-1\} \} \times \{e\} \bigr)
          \subset G.
  \]
  Clearly, $S$ is a symmetric generating set of~$G$, and 
  Proposition~\ref{propproductgrowth} shows that 
  \begin{align*}
    \sigma_{G,S} \colon \N \longrightarrow & \N \\
    r \longmapsto 
    &  
    \begin{cases}
      1 & \text{if $r=0$} \\
      |F| - 1 + 2 \cdot k & \text{if $r=1$} \\
      (|F| - 1) \cdot 2 \cdot k + 2 \cdot (k-1) + 2 \cdot k  
      & \text{if $r=2$} \\
        ( |F| - 1) \cdot 2 \cdot  (k -1) 
      + ( |F| - 1) \cdot 2 \cdot  k 
      + 2 \cdot (k-1) + 2 \cdot  k
      & \text{if $r > 2$}.   
    \end{cases}\\
    = & 
    \begin{cases}
      1 & \text{if $r=0$} \\
      |F| + 2 \cdot k - 1 & \text{if $r=1$} \\
      |F| \cdot (2 \cdot k -1) + |F| + 2 \cdot k -1  -1& \text{if $r=2$} \\
      2 \cdot |F| \cdot (2 \cdot k -1) 
      & \text{if $r > 2$}
    \end{cases}
  \end{align*}
  (see also Figure~\ref{figodd}). 
  Notice that these terms are symmetric in~$|F|$ and~$2 \cdot k -1$.
\end{exa}

\begin{figure}
  \begin{center}
    \makebox[0pt]{%
    \begin{tikzpicture}[x=0.5cm,y=0.5cm]
      \fill[black!10] (-15,0) rectangle +(1,1);
      \fill[black!10] (-13,0) rectangle +(1,1);
      \fill[black!10] (-11,0) rectangle +(1,1);
      \fill[black!10] ( 11,0) rectangle +(1,1);
      \fill[black!10] ( 13,0) rectangle +(1,1);
      \fill[black!10] ( 15,0) rectangle +(1,1);
      \fill[black!10] (-10,1) rectangle +(1,4);
      \fill[black!10] ( -8,1) rectangle +(1,4);
      \fill[black!10] ( -6,1) rectangle +(1,4);
      \fill[black!10] (  6,1) rectangle +(1,4);
      \fill[black!10] (  8,1) rectangle +(1,4);
      \fill[black!10] ( 10,1) rectangle +(1,4);
      \fill[black!10] ( -10,0) rectangle +(21,1);
      \fill[black!10] ( -5,1) rectangle +(11,4);
      \fill[black!30] (-10,0) rectangle +(1,1);
      \fill[black!30] ( -8,0) rectangle +(1,1);
      \fill[black!30] ( -6,0) rectangle +(1,1);
      \fill[black!30] (  6,0) rectangle +(1,1);
      \fill[black!30] (  8,0) rectangle +(1,1);
      \fill[black!30] ( 10,0) rectangle +(1,1);
      \fill[black!30] ( -5,1) rectangle +(1,4);
      \fill[black!30] (- 3,1) rectangle +(1,4);
      \fill[black!30] (- 1,1) rectangle +(1,4);
      \fill[black!30] (  1,1) rectangle +(1,4);
      \fill[black!30] (  3,1) rectangle +(1,4);
      \fill[black!30] (  5,1) rectangle +(1,4);
      \fill[black!30] ( -5,0) rectangle +(11,1);
      \fill[black!45] ( -5,0) rectangle +(1,1);
      \fill[black!45] (- 3,0) rectangle +(1,1);
      \fill[black!45] (- 1,0) rectangle +(1,1);
      \fill[black!45] (  1,0) rectangle +(1,1);
      \fill[black!45] (  3,0) rectangle +(1,1);
      \fill[black!45] (  5,0) rectangle +(1,1);
      \fill[black!45] (  0,1) rectangle +(1,4);
      \fill[black!60] (0,0) rectangle +(1,1);
      \draw (0.5,5) node[anchor=south]{$\{0\} \times F$};
      \draw (0.5,0) node[anchor=north] {$(0,e)$};
      \draw (5.5,0) node[anchor=north] {$(2\cdot k-1,e)$};
      \draw (-4.5,0) node[anchor=north] {$(-(2\cdot k-1),e)$};
      \begin{scope}[shift={(0.5,0.5)}]
          \foreach \i in {-15,...,15}
          { \foreach \j in {0,...,4}
            { \gvertex{(\i,\j)}
            }
          }
      \end{scope}
    \end{tikzpicture}}
  \end{center}

  \caption{Small balls in Example~\ref{exaodd}}
  \label{figodd}
\end{figure}
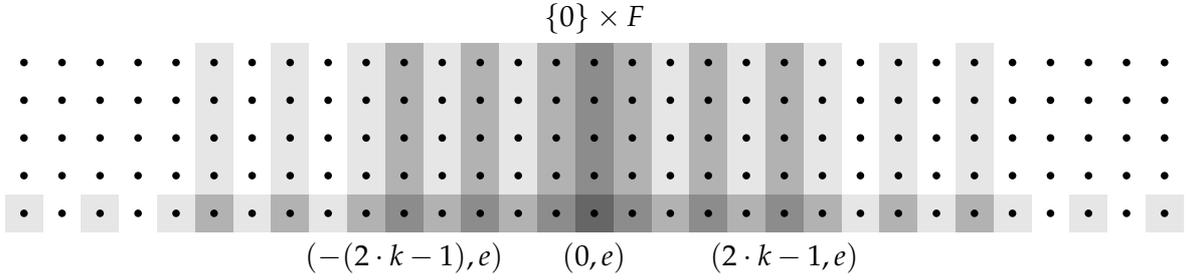

\begin{prop}[Witnesses in the symmetric case]
  Let $d \in \N_{>0}$, and let $F_1$ and $F_2$ be finite groups. If $|F_1|$ 
  and $|F_2|$ have the same parity, then there exist finite symmetric generating 
  sets~$S_1$ and $S_2$ of~$G_1 := \Z^d \times F_1$ and $G_2 := \Z^d \times F_2$ 
  respectively satisfying
  $\sigma_{G_1, S_1} = \sigma_{G_2, S_2}
  $
  and thus also
  \[\beta_{G_1,S_1} = \beta_{G_2,S_2}. 
  \]
\end{prop}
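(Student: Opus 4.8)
The plan is to reduce the statement to the rank-one case and then feed Examples~\ref{exaeven} and~\ref{exaodd} into the product formula of Proposition~\ref{propproductgrowth}. Writing $G_i \cong \Z^{d-1} \times (\Z \times F_i)$ (where for $d=1$ the first factor is trivial), I would fix once and for all the symmetric generating set $T := E_{d-1} \cup (-E_{d-1})$ of~$\Z^{d-1}$ (see Definition~\ref{defstandardgrowth}) and look for symmetric generating sets $S_i'$ of the rank-one groups $H_i := \Z \times F_i$; the sets $S_i := (T \times \{e\}) \cup (\{e\} \times S_i')$ are then finite symmetric generating sets of~$G_i$. By Proposition~\ref{propproductgrowth}, $\sigma_{G_i,S_i}$ is an explicit expression in the two spherical growth functions $\sigma_{\Z^{d-1},T}$ and $\sigma_{H_i,S_i'}$. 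Since the first factor contributes the \emph{same} function $\sigma_{\Z^{d-1},T}$ for both~$i$, it suffices to arrange $\sigma_{H_1,S_1'} = \sigma_{H_2,S_2'}$: this forces $\sigma_{G_1,S_1} = \sigma_{G_2,S_2}$, and summing over radii yields $\beta_{G_1,S_1} = \beta_{G_2,S_2}$.

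For the rank-one matching I would distinguish the two parity classes. If $|F_1|$ and $|F_2|$ are both even, I use the generating sets of Example~\ref{exaeven}, whose spherical growth function depends on the pair $(F,k)$ only through the unordered pair $\{|F|,2\cdot k\}$ (this is the asserted symmetry in $|F|$ and $2\cdot k$). Choosing the parameter $k := |F_2|/2$ in $H_1 = \Z \times F_1$ and $k := |F_1|/2$ in $H_2 = \Z \times F_2$ realises the same multiset $\{|F_1|,|F_2|\}$ on both sides, so $\sigma_{H_1,S_1'} = \sigma_{H_2,S_2'}$. If instead $|F_1|$ and $|F_2|$ are both odd, I use Example~\ref{exaodd}, whose spherical growth function is symmetric in $|F|$ and $2\cdot k - 1$; choosing $k := (|F_2|+1)/2$ in $H_1$ and $k := (|F_1|+1)/2$ in $H_2$ again realises the common multiset $\{|F_1|,|F_2|\}$ and hence equal spherical growth.

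The only point that requires the hypothesis is the legitimacy of these parameter choices, and this is exactly where the equal-parity assumption enters: to set $2\cdot k = |F_2|$ one needs $|F_2|$ even, and to set $2\cdot k - 1 = |F_2|$ one needs $|F_2|$ odd, so having $|F_1|$ and $|F_2|$ of the same parity places both groups consistently in one of the two regimes. Positivity of the parameters is automatic, since a finite group of even order has order at least~$2$ (giving $k \geq 1$ in the even case) and $(|F_i|+1)/2 \geq 1$ in the odd case. Thus I expect no genuine obstacle here: all of the real content already sits in the symmetry of the spherical growth functions recorded in Examples~\ref{exaeven} and~\ref{exaodd}, and the proposition is essentially a bookkeeping of parameters together with one application of the product formula.
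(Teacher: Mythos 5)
Your proposal is correct and follows essentially the same route as the paper: reduce to the rank-one case via Proposition~\ref{propproductgrowth}, then exploit the symmetry of the spherical growth functions in Examples~\ref{exaeven} and~\ref{exaodd} by the cross-substitution of parameters ($k$ determined by $|F_2|$ in $\Z \times F_1$ and vice versa), exactly as in the paper's even/odd case split. The only cosmetic difference is that you phrase the verification of parameter legitimacy ($k \in \N_{>0}$) explicitly, which the paper leaves implicit.
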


\begin{proof}
  It suffices to consider the case~$d=1$: If~$d >1$, we can just
  extend symmetric generating sets for~$\Z \times F_1$ and $\Z\times
  F_2$ that witness that $\Z \times F_1$ and $\Z \times F_2$ admit
  equal growth functions by a finite symmetric generating set
  of~$\Z^{d-1}$ and apply Proposition~\ref{propproductgrowth} to
  produce finite symmetric generating sets for~$G_1$ and $G_2$ that
  witness that $G_1 = \Z^{d-1} \times \Z \times F_1$ and $G_2 =
  \Z^{d-1} \times \Z \times F_2$ admit equal growth functions.

  We begin with the \emph{even case}: So, let $|F_1|$ and $|F_2|$ 
  be even, say~$|F_1| = 2 \cdot k_1$ and $|F_2| = 2\cdot k_2$ for
  certain~$k_1, k_2 \in \N_{>0}$. Then
  \begin{align*}
    S_1 & := \bigl( \{0\} \times F_1\bigr) 
          \cup
          \bigl( \{-k_2, \dots, k_2\} \times \{e\} \bigr)
          \subset \Z \times F_1, \\
    S_2 & := \bigl( \{0\} \times F_2 \bigr) 
          \cup
          \bigl( \{-k_1, \dots, k_1\} \times \{e\} \bigr)
          \subset \Z \times F_2
  \end{align*}
  are finite symmetric generating sets of~$\Z \times F_1$ and $\Z \times F_2$ 
  respectively, and Example~\ref{exaeven} shows that
  $   \sigma_{\Z \times F_1, S_1}
     = \sigma_{\Z \times F_2, S_2}
    ,
  $
  as desired.

  It remains to deal with the \emph{odd case}: So, let $|F_1|$ and $|F_2|$ 
  be odd, say~$|F_1| = 2 \cdot k_1 -1$ and $|F_2| = 2\cdot k_2-1$ for
  certain~$k_1, k_2 \in \N_{>0}$. Then
  \begin{align*}
    S_1 & := 
\bigl( \{0\} \times F_1 \bigr) 
          \cup
          \bigl( \{ 2 \cdot j +1 \mid 
          j \in \{-k_2, \dots, k_2-1\} \} \times \{e\} \bigr)
          \subset \Z \times F_1    \\
    S_2 & := 
\bigl( \{0\} \times F_2 \bigr) 
          \cup
          \bigl( \{ 2 \cdot j +1 \mid 
          j \in \{-k_1, \dots, k_1-1\} \} \times \{e\} \bigr)
          \subset \Z \times F_2
  \end{align*}
  are finite symmetric generating sets of~$\Z \times F_1$ and $\Z \times F_2$ 
  respectively, and Example~\ref{exaodd} shows that
  $   \sigma_{\Z \times F_1, S_1}
     = \sigma_{\Z \times F_2, S_2}
    ,
  $
  as desired.
\end{proof}

This finishes the proof of Theorem~\ref{thmsymmetric}.

\begin{rem}
  The construction in Example~\ref{exaeven} and~\ref{exaodd} does 
  not produce witnesses for other non-trivial cases: The system 
  \begin{align}
    x_1 + y_1     & = x_2 + y_2, \label{eq1} \\
    x_1 \cdot y_1 & = x_2 \cdot y_2 \label{eq2}
  \end{align}
  (corresponding to the constraints for radius~$1$ and larger radii, respectively)
  with~$x_1, x_2, y_1, y_2 \in \N$ has only the two solutions 
  where 
  $x_1 = x_2, y_1 = y_2$ or $x_1 = y_1, x_2 = y_2$.

  This can be easily seen as follows: Solving Equation~\eqref{eq1} 
  for~$x_2$ and using Equation~\eqref{eq2} yields
  \begin{align*}
        (x_1 - y_2) \cdot (y_1 -y_2) 
    & = x_1 \cdot y_1 - y_2 \cdot (y_1 + x_1 - y_2) \\
    & = x_1 \cdot y_1 - y_2 \cdot x_2 \\
    & = 0,
  \end{align*}
  which implies~$x_1 = y_2$ (and hence~$x_2 = y_2$) 
  or $y_1 = y_2$ (and hence~$x_1 = x_2$). 
\end{rem}

\subsection{The monoid case}

Similarly to the symmetric case, we start with the corresponding 
key example:

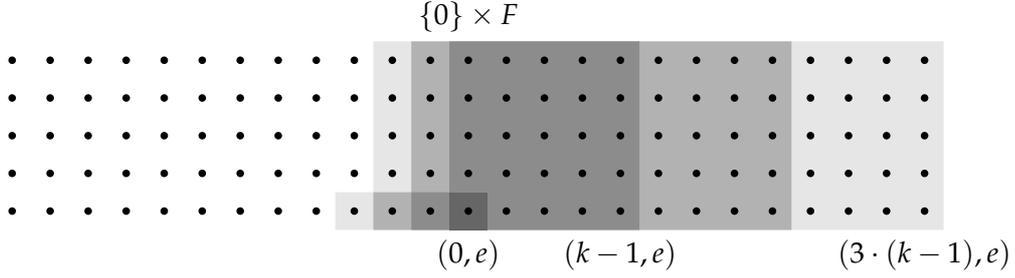
\begin{figure}
  \begin{center}
    \begin{tikzpicture}[x=0.5cm,y=0.5cm]
      \fill[black!10] (-3,0) -- (13,0) -- (13,5) 
                   -- (-2,5) -- (-2,1) -- (-3,1) -- cycle;
      \fill[black!30] (-2,0) -- (9,0) -- (9,5) 
                   -- (-1,5) -- (-1,1) -- (-2,1) -- cycle;
      \fill[black!45] (-1,0) -- (5,0) -- (5,5) 
                   -- (0,5) -- (0,1) -- (-1,1) -- cycle;
      \fill[black!60] (0,0) rectangle +(1,1);
      \draw (0.5,5) node[anchor=south]{$\{0\} \times F$};
      \draw (0.5,0) node[anchor=north] {$(0,e)$};
      \draw (4.5,0) node[anchor=north] {$(k-1,e)$};
      \draw (12.5,0) node[anchor=north] {$(3 \cdot (k-1),e)$};
      \begin{scope}[shift={(0.5,0.5)}]
          \foreach \i in {-12,...,12}
          { \foreach \j in {0,...,4}
            { \gvertex{(\i,\j)}
            }
          }
      \end{scope}
    \end{tikzpicture}
  \end{center}
  \caption{Small balls in Example~\ref{examonoid}}
  \label{figmonoid}
\end{figure}

\begin{exa}[\cite{mann}]\label{examonoid}
  Let $F$ be a finite group, let $G := \Z \times F$, let $k\in \N_{>0}$, 
  and let 
  \[ S := \bigl\{(-1,e)\bigr\} \cup \bigl(\{0,\dots,k-1\} \times F\bigr).
  \]
  Then $S$ is a finite monoid generating set of~$G$ (which except for trivial
  cases is not symmetric), and a straightforward induction over the 
  radius of balls shows that 
  \begin{align*}
    B_{G,S}(r) = \bigl\{ (-r,e)\bigr\} 
                \cup \bigl(\{-(r-1), \dots, r \cdot (k-1)\} \times F\bigr)
  \end{align*}
  for all~$r \in \N_{>0}$. Hence, we obtain (see also Figure~\ref{figmonoid})
  \begin{align*}
    \sigma_{G,S} \colon \N & \longrightarrow \N \\
    r & \longmapsto
    \begin{cases}
      1 & \text{if $r=0$}\\
      |F| \cdot (k-1) + |F| -1 + 1  = |F| \cdot k& \text{if $r>0$}.
    \end{cases}
  \end{align*}
\end{exa}

\begin{prop}[Witnesses in the monoid case \cite{mann}]
  Let $m \in \N_{>0}$ and let $F_1, \dots, F_m$ be finite groups. Moreover,
  let $d \in \N_{>0}$ and let $G_j := \Z^d \times F_j$ for all~$j \in
  \{1,\dots,m\}$.  Then for every~$j \in \{1,\dots,m\}$ there exists 
  a finite monoid generating set~$S_j \subset G_j$ of~$G_j$ such that
  $\sigma_{G_1,S_1} = \dots = \sigma_{G_m,S_m}
  $
  and thus also
  \[ \beta_{G_1,S_1} = \dots = \beta_{G_m,S_m}.
  \]
\end{prop}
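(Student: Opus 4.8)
The plan is to mirror the two-step strategy of the symmetric case, the crucial ingredient being the feature of Example~\ref{examonoid} that its spherical growth function depends on the finite group~$F$ and the parameter~$k$ only through the \emph{product}~$|F| \cdot k$. This is precisely the extra flexibility that the non-symmetric setting offers and that the symmetric examples lack, and it is what will let us realise \emph{all} torsion parts (of a fixed rank) by a single spherical growth function.

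First I would reduce to the base case~$d = 1$ by a product argument. If~$d > 1$, fix any finite monoid generating set~$T$ of~$\Z^{d-1}$ and suppose we have already produced finite monoid generating sets~$S_j' \subset \Z \times F_j$ realising a common spherical growth function~$\tau := \sigma_{\Z \times F_j, S_j'}$, independent of~$j$. Writing~$G_j = \Z^{d-1} \times (\Z \times F_j)$ and setting
\[ S_j := \bigl(T \times \{e\}\bigr) \cup \bigl(\{e\} \times S_j'\bigr), \]
Proposition~\ref{propproductgrowth} yields
\[ \sigma_{G_j, S_j}(r) = \sum_{r_1 = 0}^r \sum_{r_2 = 0}^{r - r_1} \sigma_{\Z^{d-1}, T}(r_1) \cdot \tau(r_2) \]
for all~$r \in \N$. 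The right-hand side is independent of~$j$, so the~$\sigma_{G_j, S_j}$ all coincide, and hence so do the growth functions~$\beta_{G_j, S_j}$ by summation. It therefore remains to handle~$d = 1$. There I would set~$N := \prod_{i=1}^m |F_i|$ and, for each~$j$, put~$k_j := N / |F_j| = \prod_{i \neq j} |F_i| \in \N_{>0}$, so that~$|F_j| \cdot k_j = N$ for every~$j$. Taking~$S_j := \{(-1,e)\} \cup \bigl(\{0, \dots, k_j - 1\} \times F_j\bigr) \subset \Z \times F_j$ as in Example~\ref{examonoid}, that example computes
\[ \sigma_{\Z \times F_j, S_j}(r) = \begin{cases} 1 & \text{if } r = 0, \\ |F_j| \cdot k_j = N & \text{if } r > 0, \end{cases} \]
which is the same function~$\tau$ for all~$j$, supplying the common growth function required above.

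I do not expect a genuine obstacle here: once Example~\ref{examonoid} is available, the entire content is the observation that its spherical growth is governed solely by~$|F| \cdot k$, combined with the elementary fact that the products~$|F_j| \cdot k_j$ can be simultaneously equalised by the choice~$k_j = \prod_{i \neq j} |F_i|$. The only point needing a word is that each~$k_j$ is a positive integer, which holds because every~$|F_i| \geq 1$; note also that no parity constraint appears, reflecting that in the monoid case the radius-$1$ and large-radius values of~$\sigma$ agree, so there is only the single quantity~$|F| \cdot k$ to match.
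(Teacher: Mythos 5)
Your proposal is correct and follows essentially the same route as the paper: reduce to $d=1$ via Proposition~\ref{propproductgrowth}, then use Example~\ref{examonoid} with parameters $k_j$ chosen so that the products $|F_j|\cdot k_j$ all agree. The only cosmetic difference is that you take the specific common multiple $N = \prod_{i=1}^m |F_i|$ where the paper takes an arbitrary common multiple $K$ of $|F_1|,\dots,|F_m|$.
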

\begin{proof}
  As in the symmetric case, in view of
  Proposition~\ref{propproductgrowth} it suffices to consider the
  case~$d=1$. So, let $d = 1$ and let $K \in \N_{>0}$ be some common 
  multiple of~$|F_1|, \dots, |F_m|$. For~$j \in \{1,\dots,m\}$ we 
  then consider the finite monoid generating set
  \[ S_j := \{(-1,e)\} \cup \bigl(\{0,\dots, K/|F_j|-1\} \times F_j\bigr) 
  \]
  of~$G_j$. By Example~\ref{examonoid} we have $\sigma_{G_j, S_j}(0)=1$ 
  and 
  \[ \sigma_{G_j, S_j}(r) 
     = |F_j| \cdot K /|F_j| = K
  \]
  for all~$r \in \N_{>0}$, which is independent of~$j$. Hence, 
  $ \sigma_{G_1,S_1} = \dots = \sigma_{G_m,S_m},
  $
  as desired.
\end{proof}

This completes the proof of Theorem~\ref{thmmonoid}.

\section{Minimal growth of finitely generated Abelian groups}\label{secmin}

In the following, we prove Proposition~\ref{propmingrowth}
and its consequences from the introduction.

\subsection{Minimal growth}

We start with the proof of the following version of Proposition~\ref{propmingrowth}:

\begin{prop}[Minimal growth]\label{propmingrowthgen}
  Let $G \cong \Z^d \times F$, where $d \in \N$ and $F$ is a finite group.
  \begin{enumerate}
    \item If $S$ is a finite symmetric generating set of~$G$, then
      \[ \limsup_{r \rightarrow \infty} 
      \frac{\beta_{G,S}(r)}{\beta_{d}(r)} 
      \geq |F|.
      \]
    \item If $S$ is a finite monoid generating set of~$G$, then 
      \[ \limsup_{r \rightarrow \infty} 
      \frac{\beta_{G,S}(r)}{\beta^+_{d}(r)} 
      \geq |F|.
      \]
  \end{enumerate}
\end{prop}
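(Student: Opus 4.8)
The plan is to bound $\beta_{G,S}$ from below by pushing the generating set forward to the free part and then filling in the torsion fibres. Write $\pi\colon G=\Z^d\times F\to\Z^d$ for the projection. Since $\pi$ is a surjective monoid homomorphism, $T:=\pi(S)$ is a finite monoid generating set of~$\Z^d$ (symmetric if $S$ is), and mapping minimal $S$-words forward and lifting minimal $T$-words back shows $\pi\bigl(B_{G,S}(r)\bigr)=B_{\Z^d,T}(r)$ for every~$r$. Set $M:=\max_{f\in F}d_S\bigl((0,f),e\bigr)$, which is finite because $S$ generates $G$ and $F$ is finite. The key elementary observation is a \emph{fibre-filling lemma}: for $r\ge M$ and every $x\in B_{\Z^d,T}(r-M)$ one may choose $g=(x,h)\in B_{G,S}(r-M)$, and then for each $f\in F$ we have $(x,f)=g\cdot(0,h^{-1}f)$, so $d_S\bigl((x,f),e\bigr)\le(r-M)+M=r$; hence the whole fibre $\{x\}\times F$ lies in $B_{G,S}(r)$. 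As these fibres are disjoint, this yields
\[
  \beta_{G,S}(r)\ \ge\ |F|\cdot\beta_{\Z^d,T}(r-M)
  \qquad(r\ge M).
\]
Thus in both parts it suffices to compare $\beta_{\Z^d,T}$ with the relevant standard growth function of~$\Z^d$. (The case $d=0$ is trivial: then $G=F$ is finite, $\beta_{G,S}$ is eventually constant equal to $|F|$, and $\beta_0\equiv\beta^+_0\equiv 1$; so assume $d\ge 1$.)

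For part~(1), with $S$ and hence $T$ symmetric, I would argue pointwise. Because $T$ generates~$\Z^d$ it spans~$\R^d$, so it contains $\R$-linearly independent elements $t_1,\dots,t_d$; by symmetry $T':=\{\pm t_1,\dots,\pm t_d\}\subseteq T$. Since $T'\subseteq T$ we have $B_{\Z^d,T}(r)\supseteq B_{\Z^d,T'}(r)$, and linear independence makes every representation unique, so $d_{T'}\bigl(\sum_i c_i t_i,0\bigr)=\sum_i|c_i|$ and the map $c\mapsto\sum_i c_i t_i$ is injective. Therefore $|B_{\Z^d,T'}(r)|=\bigl|\{c\in\Z^d\mid\sum_i|c_i|\le r\}\bigr|=\beta_d(r)$, giving the pointwise bound $\beta_{\Z^d,T}(r)\ge\beta_d(r)$. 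Combined with the fibre-filling lemma this gives $\beta_{G,S}(r)\ge|F|\cdot\beta_d(r-M)$, and since $\limsup_{r\to\infty}\beta_d(r-M)/\beta_d(r)=1$ by Proposition~\ref{proprankgrowth} applied to~$\Z^d$ with the generating set $E_d\cup(-E_d)$, the claim $\limsup_r\beta_{G,S}(r)/\beta_d(r)\ge|F|$ follows.

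For part~(2) the analogous sub-generating-set is not available (e.g.\ $\{\pm e_1,\pm e_2\}$ contains no monoid generating subset of size~$3$), so I would pass to the asymptotic volume of the limit shape. Monoid generation forces $0\in\operatorname{int}\operatorname{conv}(T)$, whence $\operatorname{conv}(T\cup\{0\})=\operatorname{conv}(T)=:P$; a rounding argument (represent $x\in rP\cap\Z^d$ as a nonnegative real combination of~$T$ with coefficient sum~$\le r$, truncate to integer coefficients, and reach the bounded remainder in a uniformly bounded number of $T$-steps) shows $rP\cap\Z^d\subseteq B_{\Z^d,T}(r+O(1))$ and hence $\liminf_{r\to\infty}\beta_{\Z^d,T}(r)/r^d\ge\operatorname{vol}(P)$; the same counting applied to $E_d\cup\{v_d\}$ gives $\beta^+_d(r)/r^d\to\operatorname{vol}\operatorname{conv}(E_d\cup\{v_d\})=(d+1)/d!$. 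The heart of the matter is then the lattice-geometric lemma: \emph{if $P\subset\R^d$ is a polytope with vertices in~$\Z^d$ and $0\in\operatorname{int}(P)$, then $\operatorname{vol}(P)\ge(d+1)/d!$.} I would prove this by triangulating $P$ from the origin into the pyramids $\operatorname{conv}(\{0\}\cup F_i)$ over its facets~$F_i$. Writing the affine hull of~$F_i$ as $\{\langle n_i,\cdot\rangle=k_i\}$ with $n_i\in\Z^d$ primitive and $k_i\in\N_{>0}$ (positivity since $0$ is interior), the facet is a $(d-1)$-dimensional lattice polytope, so $\operatorname{vol}_{d-1}(F_i)\ge|n_i|_2/(d-1)!$, and the pyramid has volume $\tfrac1d\cdot\tfrac{k_i}{|n_i|_2}\cdot\operatorname{vol}_{d-1}(F_i)\ge k_i/d!\ge 1/d!$; summing over the at least $d+1$ facets gives $\operatorname{vol}(P)\ge(d+1)/d!$. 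Putting the three estimates together yields $\liminf_r\beta_{\Z^d,T}(r)/\beta^+_d(r)\ge 1$, and with the fibre-filling lemma $\limsup_r\beta_{G,S}(r)/\beta^+_d(r)\ge|F|$.

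The main obstacle is the sharp volume lemma in part~(2): the constant $(d+1)/d!$ must be exactly the volume of the standard simplex $\operatorname{conv}(E_d\cup\{v_d\})$, so any softer comparison (the crude bound $\operatorname{vol}(P)\ge 1/d!$ from a single lattice simplex, or a Steinitz-type extraction of an interior-containing subset of up to $2d$ points) is not enough; the pyramid-over-facets decomposition is what delivers the correct factor~$d+1$. A secondary technical point is justifying the limit-shape counting $\beta_{\Z^d,T}(r)/r^d\to\operatorname{vol}(\operatorname{conv} T)$ cleanly enough to take the ratio, which the rounding argument handles since only the leading $r^d$ term is needed.
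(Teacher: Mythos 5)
Your reduction and your part~(1) coincide with the paper's own argument: the fibre-filling inequality $\beta_{G,S}(r)\ge|F|\cdot\beta_{\Z^d,\pi(S)}(r-M)$ is exactly the paper's Lemma~\ref{lemredtofree}, the extraction of a linearly independent set $t_1,\dots,t_d$ with $\pm t_i\in T$ giving the pointwise bound $\beta_{\Z^d,T}\ge\beta_d$ is exactly Lemma~\ref{lemfree}(1), and the final passage via $\limsup_{r\to\infty}\beta_d(r-M)/\beta_d(r)=1$ (Proposition~\ref{propliminf}) is the same. Where you genuinely diverge is part~(2). The paper stays pointwise: it builds an explicit injection $\varphi\colon\Z^d\to\Z^d$ carrying each $E_d\cup\{v_d\}$-ball into the $S$-ball of the \emph{same} radius (sending the minimal representation $x=\sum_j x_j'e_j+x'v_d$ to $\sum_j x_j'e_j'+x'v'_{m(x)}$ for suitable $e_j',v_j'\in S$, with injectivity checked by reconstructing $x'$ from the minimal rescaled coordinates), which yields $\beta_{\Z^d,S}\ge\beta^+_d$ for every radius. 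You instead prove only an asymptotic comparison via limit shapes: monoid generation forces $0\in\operatorname{int}\operatorname{conv}(T)$ (a separation argument, which is correct), the rounding argument gives $\liminf_{r\to\infty}\beta_{\Z^d,T}(r)/r^d\ge\operatorname{vol}(\operatorname{conv} T)$ and $\beta^+_d(r)/r^d\to(d+1)/d!$, and your lattice-geometry lemma --- every lattice polytope with the origin in its interior has volume at least $(d+1)/d!$, proved by summing the pyramids over its at least $d+1$ facets, each of volume $\ge k_i/d!\ge 1/d!$ since the affine lattice in the hyperplane $\{\langle n_i,\cdot\rangle=k_i\}$ has covolume $|n_i|_2$ --- closes the gap. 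All of these steps check out, and you correctly identify the crux: the constant must be exactly $\operatorname{vol}(\operatorname{conv}(E_d\cup\{v_d\}))=(d+1)/d!$, so no softer volume bound would do. The trade-off: the paper's injection buys the stronger pointwise statement $\beta_{\Z^d,S}\ge\beta^+_d$ at the cost of a delicate combinatorial injectivity verification, while your route is softer, only asymptotic (which is all the $\limsup$ statement requires), actually yields the $\liminf$ version of part~(2), and explains the constant geometrically: $(d+1)/d!$ is the minimal volume of a lattice polytope with interior lattice point at the origin, attained by the standard simplex.
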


We first reduce to the free Abelian case (Lemma~\ref{lemredtofree}), 
and then compare growth functions in the free Abelian case with the 
standard growth functions (Lemma~\ref{lemfree}).

\begin{lem}[Reduction to the free Abelian case]\label{lemredtofree}
  Let $G \cong \Z^d \times F$, where $d \in \N$ and $F$ is a finite
  group, let $S \subset G$ be a finite monoid generating set of~$G$,
  let $\pi \colon G \cong \Z^d \times F \longrightarrow \Z^d$ be the
  canonical projection, and let $R := \diam_{d_S} F$. Then
  \[ \beta_{G,S}(r) 
     \geq 
     |F| \cdot \beta_{\Z^d, \pi(S)}(r-R)
  \]
  for all~$r \in \N_{\geq R}$.
\end{lem}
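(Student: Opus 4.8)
The plan is to show that each entire fiber $\{z\} \times F$ sitting over a point $z$ of the ball $B_{\Z^d, \pi(S)}(r-R)$ is contained in $B_{G,S}(r)$. Once this is established, the estimate follows by a counting argument: distinct fibers are disjoint and each contains exactly $|F|$ elements, so
\[
  \beta_{G,S}(r) = \bigl|B_{G,S}(r)\bigr| \geq |F| \cdot \bigl|B_{\Z^d, \pi(S)}(r-R)\bigr| = |F| \cdot \beta_{\Z^d, \pi(S)}(r-R).
\]
Thus the whole argument reduces to the inclusion claim: for every $z \in \Z^d$ with $d_{\pi(S)}(z,0) \leq r-R$ and every $f \in F$, one has $d_S\bigl((z,f),e\bigr) \leq r$.

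To prove the claim I would first lift a minimal-length $\pi(S)$-representation of $z$. Writing $\pi \colon G \to \Z^d$ and $\phi \colon G \to F$ for the two projection homomorphisms of the direct product, a representation $z = \pi(s_1) + \dots + \pi(s_n)$ with $s_i \in S$ and $n = d_{\pi(S)}(z,0) \leq r-R$ lifts to the element $g := s_1 \cdots s_n \in G$. Since $\pi$ is a monoid homomorphism we get $\pi(g) = z$, so $g = (z, f_0)$ where $f_0 := \phi(g)$, and by construction $d_S(g,e) \leq n \leq r-R$. This produces \emph{some} point of the fiber over $z$ inside $B_{G,S}(r-R)$.

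The second step corrects the $F$-coordinate to reach the prescribed $(z,f)$. Setting $h := (0, f_0^{-1} f) \in \{0\} \times F$, the coordinatewise multiplication in $G$ gives $g \cdot h = (z,f)$. Because both $h$ and $e$ lie in $\{0\} \times F$, the definition of $R = \diam_{d_S} F$ yields $d_S(h,e) \leq R$. Finally, subadditivity of the word length — if $g = s_1 \cdots s_m$ and $h = t_1 \cdots t_\ell$ are minimal $S$-words, then $gh = s_1 \cdots s_m t_1 \cdots t_\ell$ has length at most $m+\ell$ — gives $d_S\bigl((z,f),e\bigr) = d_S(gh,e) \leq d_S(g,e) + d_S(h,e) \leq (r-R) + R = r$, which is exactly the claim.

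The only genuine point requiring care — and the main obstacle — is that $d_S$ need not be symmetric, so the order of multiplication is significant: I multiply the lift $g$ of $z$ on the \emph{right} by the fiber-correcting element $h$, so that the estimate invokes only $d_S(h,e) \leq R$, which is precisely the quantity controlled by the diameter of $\{0\} \times F$. It then remains only to observe that $R$ is finite, since $S$ generates $G$ and $F$ is finite, and that the fibers over distinct $z$ are disjoint, so the counting in the first paragraph is legitimate.
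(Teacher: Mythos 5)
Your proof is correct and follows essentially the same route as the paper: lift a minimal $\pi(S)$-word for $z$ to get some point of the fiber into $B_{G,S}(r-R)$, correct the $F$-coordinate by an element of word length at most $R=\diam_{d_S}F$, and count $|F|$ elements per fiber. The paper phrases this set-theoretically via $B_{G,S}(r)\supset B_{G,S}(r-R)+F=\pi^{-1}\bigl(\pi(B_{G,S}(r-R))\bigr)$ and $\pi(B_{G,S}(r-R))=B_{\Z^d,\pi(S)}(r-R)$, but the ingredients and estimates are identical to yours.
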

\begin{proof}
  Because $\pi$ is surjective, $\pi(S)$ is indeed a generating set
  of~$\Z^d$; moreover, $F$ is finite, and so $R$ is
  well-defined.  Let $r \in \N_{\geq R}$. Then, by definition of 
  the word metric, 
  \[ \pi \bigl( B_{G,S}(r-R)\bigr) = B_{\pi(G),\pi(S)}(r-R)  
  \]
  and, by definition of~$R$, we have 
  \[ B_{G,S}(r) \supset B_{G,S}(r - R) + F. 
  \]
  Hence, we obtain
  \begin{align*}
    \beta_{G,S}(r)  
    & \geq \bigl|B_{G,S}(r - R) + F\bigr|
    = \bigl| \pi^{-1}\bigl(\pi(B_{G,S}(r-R)\bigr)\bigr|
    \\
    & = |F| \cdot \bigl| \pi \bigl( B_{G,S}(r-R)\bigr)\bigr|  
      = |F| \cdot \bigl|B_{\pi(G),\pi(S)}(r-R)\bigr|
    \\
    & = |F| \cdot \beta_{\Z^d, \pi(S)}(r-R).
    \qedhere
  \end{align*}
\end{proof}

\begin{lem}[Minimal growth of free Abelian groups]\label{lemfree}
  Let $d \in \N$.
  \begin{enumerate}
    \item Let $S \subset \Z^d$ be a finite symmetric generating set. Then
      \[ \beta_{\Z^d,S} \geq \beta_d. 
      \]
    \item Let $S \subset \Z^d$ be a finite monoid generating set. Then
      \[ \beta_{\Z^d,S} \geq \beta^+_d. 
      \]
  \end{enumerate}
\end{lem}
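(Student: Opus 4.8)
The plan is to prove a lower bound on the size of balls for an arbitrary finite generating set of $\Z^d$, by comparing it with the standard generating set. The key structural fact is that both standard generating sets yield balls that are, up to scaling, the ``smallest possible'' convex bodies among all generating sets: in the symmetric case $E_d\cup(-E_d)$ gives the cross-polytope (the $\ell^1$-ball), and in the monoid case $E_d\cup\{v_d\}$ gives a simplex. So I would first understand precisely which lattice points lie in $B_{\Z^d,S}(r)$ for an arbitrary generating set $S$, and argue that there are at least as many as in the standard ball of the same radius.

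\medskip

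First I would reduce to a comparison of convex hulls. For a finite generating set $S$, every element of $B_{\Z^d,S}(r)$ is a sum of at most $r$ elements of $S$, so
\[
  B_{\Z^d,S}(r) = \bigl(\,r\cdot\mathrm{conv}(S\cup\{0\})\,\bigr)\cap\Z^d
\]
in the monoid case (and with $S$ symmetric the body $\mathrm{conv}(S)$ is symmetric about the origin). The heart of the matter is therefore to show that the rescaled body $r\cdot\mathrm{conv}(S)$ contains a translate (or, better, directly contains) the standard body $r\cdot P$, where $P$ is the cross-polytope $\mathrm{conv}(E_d\cup(-E_d))$ in part~(1) and the simplex $\mathrm{conv}(E_d\cup\{v_d\})$ in part~(2). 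Since $S$ generates $\Z^d$, the convex hull $\mathrm{conv}(S)$ must contain the origin in its interior (in the symmetric case this is automatic by symmetry; in the monoid case it follows because otherwise $S$ would lie in a closed half-space and could not generate the group). Once $\mathrm{conv}(S)$ is a convex body containing a neighbourhood of the origin and containing lattice-point directions spanning $\Z^d$, I would argue that it must contain each standard basis vector direction out to a certain extent, giving the containment $r\cdot P\subseteq r\cdot\mathrm{conv}(S)$ after possibly a lattice-preserving adjustment.

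\medskip

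From such a containment of bodies I would then count lattice points: if $r\cdot P\cap\Z^d\subseteq \bigl(r\cdot\mathrm{conv}(S)\bigr)\cap\Z^d=B_{\Z^d,S}(r)$, then
\[
  \beta_{\Z^d,S}(r)\geq\bigl|\,(r\cdot P)\cap\Z^d\,\bigr|=\beta_d(r)
\]
in part~(1), and analogously $\beta_{\Z^d,S}(r)\geq\beta^+_d(r)$ in part~(2), which is exactly the claimed inequality (note the statement asks for it for all $r$, not merely asymptotically). The cleaner route avoids convex geometry entirely: in the symmetric case I would show directly that $d_S\le d_{E_d\cup(-E_d)}$ pointwise, i.e.\ that every element reachable in $r$ standard steps is reachable in $r$ steps of~$S$, which would follow if each standard generator $\pm e_i$ lay in $B_{\Z^d,S}(1)$ --- but this is false in general, so the comparison must be done at the level of \emph{volumes} of balls rather than metrics.

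\medskip

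The main obstacle, and the step I expect to require real care, is precisely that one cannot compare the word metrics pointwise: a generating set such as $\{\pm2e_1,\pm e_1\mp e_2,\dots\}$ can make individual standard basis vectors expensive while still producing a ball that is large in volume. So the argument genuinely has to be about the \emph{number} of lattice points, which means I expect to need either (a) the convex-hull description above together with a monotonicity statement ``larger convex body $\Rightarrow$ at least as many lattice points,'' or (b) a direct extremality statement that among all lattice-point-symmetric (resp.\ all generating) convex bodies containing $0$ in their interior, the cross-polytope (resp.\ the standard simplex) has the fewest lattice points at every scale $r$. Establishing that extremality cleanly --- in particular handling the monoid case where the relevant body is a simplex rather than a symmetric polytope, and justifying the containment $r\cdot P\subseteq r\cdot\mathrm{conv}(S)$ for \emph{every} generating set --- is where I would concentrate the effort, likely by choosing, for each coordinate half-axis, a specific element of $S$ with the right sign pattern and using that $S$ spans to dominate the standard body facet by facet.
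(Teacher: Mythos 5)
Your argument hinges on the identity $B_{\Z^d,S}(r) = \bigl(r\cdot\mathrm{conv}(S\cup\{0\})\bigr)\cap\Z^d$, and this identity is false: only the inclusion~$\subseteq$ holds, which is the wrong direction for the lower bound you want. For $d=1$, $S=\{2,-2,3,-3\}$ and $r=1$ we have $B_{\Z,S}(1)=\{0,\pm 2,\pm 3\}$, whereas $\bigl(1\cdot\mathrm{conv}(S\cup\{0\})\bigr)\cap\Z = \{-3,\dots,3\}$; the points $\pm 1$ lie in the dilated hull but are not sums of at most one element of~$S$. Consequently, even a proof that $r\cdot P \subseteq r\cdot \mathrm{conv}(S)$ would only place the standard lattice points inside a set that \emph{contains} $B_{\Z^d,S}(r)$, which gives no lower bound on $\beta_{\Z^d,S}(r)$. (The two counts do agree asymptotically as $r\to\infty$, but, as you yourself note, the lemma is claimed for every~$r$.) Moreover, the body containment itself fails: for $d=2$ and $S=\{\pm(1,1),\pm(2,1)\}$, a finite symmetric generating set of~$\Z^2$, one computes $\mathrm{conv}(S)=\{(x,y) \mid |y|\le 1,\ |2x-3y|\le 1\}$, which does not contain~$e_1$. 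So both pillars of the convex-geometric plan break, and the ``lattice-preserving adjustment'' you mention in passing is not a side issue but the entire content of the proof.

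The repair --- and the paper's actual route --- replaces containment of bodies by an injective map. Since $S$ generates~$\Z^d$, it contains a $\Q$-linearly independent subset $E=\{e'_1,\dots,e'_d\}$; in the symmetric case $-E\subseteq S$ as well, so the injective linear map $\sum_j a_j e_j \longmapsto \sum_j a_j e'_j$ sends $B_{\Z^d,E_d\cup(-E_d)}(r)$ into $B_{\Z^d,S}(r)$ for every~$r$, and no description of $S$-balls as lattice points of a convex set is needed. The genuinely hard case, which your proposal explicitly defers (``where I would concentrate the effort''), is the monoid one: there need be no element of~$S$ whose $E$-coordinates are all negative (already $S=E_2\cup(-E_2)$ contains no vector with both coordinates negative), so no subset of~$S$ is a linear copy of $E_d\cup\{v_d\}$, and no single choice of directions can dominate the simplex ``facet by facet.'' The paper overcomes this with a piecewise-defined injection $\varphi\colon\Z^d\to\Z^d$: it writes $x$ in its minimal representation $\sum_j x'_j e_j + x'\cdot v_d$ and substitutes for~$v_d$ a coordinate-minimising element $v'_{m(x)}\in S$ whose choice depends on which coordinate of~$x$ is smallest, then proves injectivity by reconstructing $x'$ and the $x'_j$ from~$\varphi(x)$. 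This gluing idea is the missing ingredient, and the scaffolding your proposal sets up cannot produce it.
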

\begin{proof}
  Let $S \subset \Z^d$ be a finite monoid generating set. Looking
  at~$\Z^d \otimes \Q$ shows that $S$ contains a $d$-element
  subset~$E$ that is linearly independent over~$\Q$. In particular,
  the submonoid~$N$ of~$\Z^d$ generated by~$E$ is isomorphic to~$\N^d$,
  and the subgroup~$Z$ of~$\Z^d$ generated by~$E$ is isomorphic to~$\Z^d$;
  in both cases, $E$ is a free generating set of the corresponding
  submonoid or subgroup, respectively.

  We now prove the first part of the lemma: If $S$ is symmetric, 
  then also $-E \subset S$, and we obtain
  \[ \beta_{\Z^d, S} \geq \beta_{Z, E \cup (-E)} = \beta_{\Z^d, E_d \cup (-E_d)} 
     =\beta_d.
  \]

  For the second part, the combinatorics is slightly more complicated 
  because not every finite monoid generating set of~$\Z^d$ contains a 
  generating set corresponding to~$E_d \cup \{v_d\}$. 

  In order to prove the second part, it suffices to construct an injective
  map~$\varphi \colon \Z^d \longrightarrow \Z^d$ that maps
  $E_d\cup\{v_d\}$-balls into $S$-balls of the same radius. We will 
  now give the construction of such a map:

  We choose an order~$(e'_1, \dots, e'_d)$ on~$E$ and denote
  by~$\pi_1, \dots, \pi_d \colon \Q^d \longrightarrow \Q$ the
  coordinate maps corresponding to the (ordered) basis~$E$ of~$\Z^d
  \otimes \Q$. Because $S$ is a monoid generating set of~$\Z^d$, for
  each~$j \in \{1,\dots, d\}$ there exists~$v'_j \in S$
  with~$\pi_j(v'_j) < 0$; we choose~$v'_j$ in such a way that it
  minimises~$\pi_j$ on~$S$. We denote the set of minimal
  $E_d$-coordinates of an element~$x \in \Z^d$ by
  \[ M(x) := \bigl\{ j \in \{1,\dots,d\} \bigm| x_j = \min_{k \in\{1,\dots,d\}} x_k \bigr\}, 
  \]
  and the set of minimal rescaled $E$-coordinates of an element~$x \in \Q^d$ 
  by
  \[ M'(x) := \Bigl\{ j \in \{1,\dots,d\} 
             \Bigm| 
             \frac{\pi_j(x)}{|\pi_j(v'_j)|} = \min_{k \in\{1,\dots,d\}} \frac{\pi_k(x)}{|\pi_k(v'_k)|} 
             \Bigr\}; 
  \]
  moreover, in this situation, we write
  \[ m(x) := \min M(x)
     \quad
     \text{and}
     \quad
     m'(x) := \min M'(x),
  \]
  respectively.
  
  We now define the map~$\varphi \colon \Z^d \longrightarrow \Z^d$ as
  follows: Let $x \in \Z^d$. Then one easily sees that $x$ has a unique
  minimal representation 
  \[ x = \sum_{j=1}^d x'_j \cdot e_j + x' \cdot v_d 
  \]
  with $x'_1, \dots, x'_d, x' \in \N$ with respect to the word
  metric~$d_{E_d \cup \{v_d\}}$. We set
  \[ \varphi(x) 
     := 
     \sum_{j=1}^d x'_j \cdot e'_j + x' \cdot v'_{m(x)}.
  \]

  By construction, we have $\varphi(B_{\Z^d, E_d \cup \{v_d\}}(r))
    \subset B_{\Z^d, S}(r)$ for all~$r \in \N$. It remains to show that $\varphi$ is injective: 
  Clearly, $\varphi|_{\N^d}$ is injective, and, by construction, $\pi_j(\varphi(x))\geq 0$ 
  for all~$j \in \{1,\dots, d\}$ if and only if $x \in \N^d$. 

  In case $x \in \Z^d \setminus \N^d$ (which is equivalent to~$x' >
  0$) we have
  \[  m'\bigl(\varphi(x)\bigr) = m(x).
  \]
  Hence, we can reconstruct $x'$ from the value~$\varphi(x)$ as the
  minimal natural number~$a$ such that all $E$-coordinates
  of~$\varphi(x) - a \cdot v'_{m(x)}$ are non-negative; because $E$ is
  free, we can then also read off~$x'_1, \dots, x'_d$
  from~$\varphi(x)$. Thus, $x$ is determined uniquely by~$\varphi(x)$,
  and so $\varphi$ is injective. This finishes the proof of
  Lemma~\ref{lemfree}.
\end{proof}

We can now combine these two steps to complete the proof of 
Proposition~\ref{propmingrowthgen}:

\begin{proof}[Proof of Proposition~\ref{propmingrowthgen}]
  Let $R := \diam_{d_S} \tors G$. 
  We begin with the \emph{symmetric case}: In view of Lemma~\ref{lemredtofree} 
  and~\ref{lemfree}, we have 
  $\beta_{G,S}(r) \geq |F| \cdot \beta_d(r-R)
  $
  for all~$r \in \N_{\geq R}$. Therefore, 
  \begin{align*}
    \limsup_{r \rightarrow \infty} 
    \frac{\beta_{G,S}(r)}{\beta_{d}(r)}
    &\geq
    |F| 
    \cdot
    \limsup_{r \rightarrow \infty} 
    \frac{\beta_{d}(r-R)}{\beta_{d}(r)}
    .
  \end{align*}
  The limes superior on the right hand side is equal to~$1$ by
  Proposition~\ref{propliminf}, which gives the desired estimate.

  Using the corresponding cases for monoid generating sets 
  allows to prove the \emph{monoid case} by the same arguments.
\end{proof}

\subsection{Consequences of minimal growth: Finite ambiguity}

We now prove the finiteness statement Corollary~\ref{corfinite}: 

\begin{cor}[Finite ambiguity]
  Let $\beta \colon \N \longrightarrow \N$ be a function.
   Then there is at most one~$d\in \N$ and at most finitely many
   isomorphism types of finite groups~$F$ such that $\Z^d \times F$
   has a finite monoid generating set~$S$ with~$\beta_{\Z^d \times
     F,S} = \beta$.
\end{cor}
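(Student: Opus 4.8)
The plan is to read off the rank $d$ and an upper bound for $|F|$ directly from the fixed function~$\beta$, using the polynomial growth estimate of Proposition~\ref{proprankgrowth} to pin down~$d$ and the minimal growth inequality of Proposition~\ref{propmingrowthgen} to bound the torsion part. Since there are only finitely many isomorphism types of finite groups of any bounded order, a bound on~$|F|$ will suffice.

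First I would argue that at most one~$d$ can occur. Suppose $\Z^d \times F$ carries a finite monoid generating set~$S$ with $\beta_{\Z^d \times F, S} = \beta$. If $d = 0$, then $\Z^0 \times F = F$ is finite and $\beta$ is eventually constant, hence bounded; if $d > 0$, then Proposition~\ref{proprankgrowth} yields a constant with $\frac1C \cdot r^d \leq \beta(r) \leq C \cdot r^d$ for all~$r$, so $\beta$ is unbounded of polynomial order exactly~$d$. Thus boundedness of~$\beta$ separates the case $d=0$ from $d>0$, and two distinct positive values $d_1 < d_2$ are incompatible: combining $\beta(r) \leq C_1 \cdot r^{d_1}$ with $\beta(r) \geq \frac1{C_2} \cdot r^{d_2}$ would force $r^{d_2 - d_1} \leq C_1 \cdot C_2$ for all large~$r$, which is impossible. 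Hence $d$ is uniquely determined by~$\beta$.

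Next I would fix this unique~$d$ and bound~$|F|$. If no group of the form $\Z^d \times F$ realises~$\beta$, there is nothing to prove; otherwise fix one witness, so that $\beta(r) \leq C \cdot r^d$ for all large~$r$ by Proposition~\ref{proprankgrowth}. In the case $d = 0$ the order $|F|$ equals the eventual constant value of~$\beta$ and is therefore bounded. For $d > 0$, I would apply the monoid part of Proposition~\ref{propmingrowthgen} to each candidate: any finite group~$F$ for which $\Z^d \times F$ admits a finite monoid generating set~$S$ with $\beta_{\Z^d \times F, S} = \beta$ satisfies
\[ |F| \leq \limsup_{r \rightarrow \infty} \frac{\beta(r)}{\beta^+_d(r)}. \]
Since $\beta^+_d(r) \geq \frac1{C'} \cdot r^d$ for all large~$r$ by Proposition~\ref{proprankgrowth} applied to~$\Z^d$ with the generating set $E_d \cup \{v_d\}$, and since $\beta(r) \leq C \cdot r^d$, the right-hand side is at most $C \cdot C' < \infty$. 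Hence $|F|$ is bounded by a constant depending only on~$\beta$, and the finitely many isomorphism types of finite groups of order below this bound complete the argument.

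The only place where genuine content enters is the minimal growth inequality of Proposition~\ref{propmingrowthgen}, which converts the a priori unrelated data $\beta$ and~$|F|$ into an honest upper bound on~$|F|$; I expect this to be the crux, while the determination of~$d$ and the passage to finitely many isomorphism types are routine bookkeeping with polynomial growth orders.
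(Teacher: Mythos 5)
Your proposal is correct and follows essentially the same route as the paper: pin down $d$ via the polynomial growth bounds of Proposition~\ref{proprankgrowth}, bound $|F|$ by the limes superior $\limsup_{r\to\infty}\beta(r)/\beta^+_d(r)$ using Proposition~\ref{propmingrowthgen}, and conclude via the finiteness of isomorphism types of finite groups of bounded order. Your explicit treatment of the case $d=0$ and your direct derivation of the finiteness of the limes superior from the polynomial bounds (where the paper cites Propositions~\ref{propgenset} and~\ref{proprankgrowth}) are only minor presentational differences.
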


\begin{proof}
  Suppose $d \in \N$ and $F$ is a finite group such that there exists
  a finite monoid generating set~$S$ of~$G := \Z^d \times F$
  with~$\beta_{G,S} = \beta$. Then $d$ is determined by the growth 
  rate of~$\beta$ (Proposition~\ref{proprankgrowth}), the limes superior
  \[ \limsup_{r \rightarrow \infty} 
         \frac{\beta(r)}{\beta^+_d(r)}
    =\limsup_{r \rightarrow \infty} 
         \frac{\beta_{G,S}(r)}{\beta^+_d(r)}
  \]
  is finite (Proposition~\ref{propgenset} and~\ref{proprankgrowth}), 
  and by Proposition~\ref{propmingrowthgen} we 
  have
  \begin{align*}
    |F| 
    \leq \limsup_{r \rightarrow \infty} 
         \frac{\beta_{G,S}(r)}{\beta^+_d(r)}
    = \limsup_{r \rightarrow \infty} 
         \frac{\beta(r)}{\beta^+_d(r)}
   < \infty.
  \end{align*}
  Hence, $|F|$ is bounded in terms of~$\beta$. As there are only
  finitely many isomorphism types of groups of a given finite order
  there are only finitely many different candidates of isomorphism
  types for~$F$.
\end{proof}

\subsection{Consequences of minimal growth: Recognising the size of torsion from growth sets}

In this section, we show that the set of all growth functions encodes
the size of the torsion part (Corollary~\ref{corgrowthset}):

\begin{cor}[Recognising the size of the torsion part from the set 
  of growth functions]
  Let $d,d' \in \N$, let $F$ and $F'$ be finite groups, and let
  $G \cong \Z^d \times F$ as well as $G' \cong \Z^{d'} \times F'$.
  \begin{enumerate}
  \item
    If
      \begin{align*} 
         & \; \{ \beta_{G,S} 
              \mid \text{$S$ is a finite symmetric generating set of~$G$}\}
         \\
       = & \; \{ \beta_{G',S'} 
              \mid \text{$S'$ is a finite symmetric generating set of~$G'$}\},
      \end{align*}
      then $d=d'$ and $|F| = |F'|$.
  \item
    If
      \begin{align*} 
         & \; \{ \beta_{G,S} 
              \mid \text{$S$ is a finite monoid generating set of~$G$}\}
         \\
       = & \; \{ \beta_{G',S'} 
              \mid \text{$S'$ is a finite monoid generating set of~$G'$}\},
      \end{align*}
      then $d = d'$ and $|F| = |F'|$.
  \end{enumerate}
\end{cor}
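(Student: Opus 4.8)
The plan is to reduce both parts to Proposition~\ref{propmingrowthgen} together with the existence of \emph{witness} generating sets realising the extremal ratio~$|F|$. The rank equality $d=d'$ is the easy half: by Proposition~\ref{proprankgrowth} every growth function $\beta_{G,S}$ grows like $r^d$, so the growth rate (hence $d$) is an invariant of any single element of the growth set, and since the two sets are non-empty and equal, I would read off $d=d'$ immediately. The substance is the equality $|F|=|F'|$, and for this I would extract a numerical invariant of the \emph{entire} growth set that equals $|F|$.

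The key step is to show that
\[
  |F| = \min_{S} \limsup_{r \rightarrow \infty} \frac{\beta_{G,S}(r)}{\beta^+_d(r)},
\]
where $S$ ranges over all finite monoid generating sets of $G$ (and the symmetric analogue with $\beta_d$ in the denominator). The inequality ``$\geq |F|$'' for every $S$ is exactly Proposition~\ref{propmingrowthgen}, so the $\limsup$ ratio is bounded below by $|F|$ uniformly. For the reverse direction I need a single generating set attaining the value $|F|$ (so that the minimum over $S$ is exactly $|F|$). In the monoid case I would use the witness set from Example~\ref{examonoid}: taking $k=1$ there gives $S=\{(-1,e)\}\cup(\{0\}\times F)$ with $\sigma_{G,S}(r)=|F|$ for all $r>0$, so $\beta_{G,S}(r)=|F|\cdot r + O(1)$, which matches $\beta^+_d$ up to the factor $|F|$ along the $d=1$ direction; the general $d$ case follows by the product formula of Proposition~\ref{propproductgrowth} applied to $\beta^+_d$ and this witness. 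In the symmetric case I would similarly use Example~\ref{exaeven} or~\ref{exaodd} with the smallest admissible $k$ to produce a symmetric set whose growth is asymptotically $|F|\cdot\beta_d$.

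Once the invariant is established, the argument concludes quickly: the quantity
\[
  \mu := \min_{\beta \in \mathcal{B}} \;\limsup_{r \rightarrow \infty} \frac{\beta(r)}{\beta^+_d(r)}
\]
depends only on the growth set $\mathcal{B}$ (and on $d$, which we have already shown is common to $G$ and $G'$). Applying the computation above to $G$ gives $\mu=|F|$, and applying it to $G'$ gives $\mu=|F'|$; since the growth sets coincide we get $|F|=|F'|$. The symmetric part is identical with $\beta_d$ replacing $\beta^+_d$ and the symmetric witness replacing the monoid witness.

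The main obstacle I anticipate is not the lower bound (that is handed to us by Proposition~\ref{propmingrowthgen}) but pinning down that the witness sets actually \emph{achieve} the ratio $|F|$ rather than merely bounding it, i.e.\ computing the exact $\limsup$ of $\beta_{G,S}(r)/\beta^+_d(r)$ for the witness. This requires knowing the precise polynomial asymptotics of $\beta^+_d$ and of the witness growth function and checking that their leading coefficients differ by exactly the factor $|F|$; the subtlety is that the minimal monoid generating set used in Example~\ref{examonoid} generates $\Z$ asymmetrically (with a single negative step), so I must verify that its asymptotic leading behaviour really matches $\beta^+_d$ and not some faster-growing comparison. Handling the product structure for $d>1$ via Proposition~\ref{propproductgrowth} adds bookkeeping but no essential difficulty.
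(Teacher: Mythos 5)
Your overall architecture---the uniform lower bound from Proposition~\ref{propmingrowthgen}, plus a witness generating set whose growth is asymptotically $|F|$ times the standard growth, combined via the equality of the growth sets---is exactly the skeleton of the paper's proof (the paper does not package it as a min-invariant $\mu$, but simply notes that the witness's growth function also lies in the growth set of~$G'$ and applies Proposition~\ref{propmingrowthgen} to~$G'$; the two formulations are equivalent). The rank part and your symmetric case go through: with $k=1$ in Example~\ref{exaeven}, and since $E_d\cup(-E_d)$ \emph{is} a product generating set, Proposition~\ref{propproductgrowth} yields the pointwise bound $\sigma_{G,S}\le|F|\cdot\sigma_{\Z^d,E_d\cup(-E_d)}$ and hence $\limsup_{r}\beta_{G,S}(r)/\beta_d(r)\le|F|$. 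However, your monoid case has two problems. The first is minor: the $k=1$ set from Example~\ref{examonoid} is \emph{not} a monoid generating set of $\Z\times F$, since all its elements have first coordinate $\le 0$, so no element with positive first coordinate is a product of them. (A symptom: your claimed growth $|F|\cdot r+O(1)$ would be $\sim\tfrac12|F|\cdot\beta^+_1(r)$, contradicting the lower bound of Proposition~\ref{propmingrowthgen}.) This is fixable by taking $k=2$, which gives $\beta_{G,S}(r)=2|F|r+1\sim|F|\cdot\beta^+_1(r)$.

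The second problem is the genuine gap: for $d\ge 2$ the product construction does not achieve the ratio $|F|$. The set $E_d\cup\{v_d\}$ is not of the product form required by Proposition~\ref{propproductgrowth} (the vector $v_d=(-1,\dots,-1)$ mixes all coordinates), and $\beta^+_d$ is asymptotically strictly smaller than the growth of any such product set. Concretely, $\beta^+_d(r)\sim\frac{d+1}{d!}\,r^d$ (the ball is asymptotically $r$ times the simplex with vertices $e_1,\dots,e_d,v_d$, whose volume is $\frac{d+1}{d!}$), whereas the product of your one-dimensional witness with $E_{d-1}\cup\{v_{d-1}\}$ has growth $\sim\frac{2|F|}{(d-1)!}\,r^d$, so its limsup ratio against $\beta^+_d$ is $\frac{2d}{d+1}\,|F|>|F|$ for $d\ge2$. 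Your invariant $\mu$ is then only pinned down to the interval $\bigl[|F|,\frac{2d}{d+1}|F|\bigr]$, which does not determine $|F|$, and the conclusion $|F|=|F'|$ does not follow. This is precisely the subtlety you flagged and then dismissed as bookkeeping. The paper's fix is to avoid the product formula here entirely and take the witness $S=E_d\cup\{v_d\}\cup F\subset G$ directly: every element of $B_{G,S}(r)$ has $\Z^d$-component in $B_{\Z^d,E_d\cup\{v_d\}}(r)$, so $\beta_{G,S}(r)\le|F|\cdot\beta^+_d(r)$ holds for all $r$ and all $d$ at once, with no asymptotic computation; with this witness your argument closes.
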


\begin{proof}
  In view of Proposition~\ref{proprankgrowth} it suffices to show that
  the sizes of the torsion parts must be equal if the sets of growth
  functions coincide.

  We begin with the \emph{symmetric case}: We consider the finite 
  symmetric generating set (see Definition~\ref{defstandardgrowth} 
  for the definition of~$E_d$)
  \[ S := E_d \cup (-E_d) \cup F
  \]
  of~$G$ (viewing $\Z^{\rk G}$ and $F$ as subsets of~$G$). For all~$r
  \in \N$ we have
  \begin{align*}
    \beta_{G,S}(r) & = \bigl| B_{G,S} (r)\bigr| 
                  \leq \bigl| B_{\Z^d,E_d \cup (-E_d)}(r) + F \bigr| = \beta_{d}(r) \cdot |F|,
  \end{align*}
  and hence
  \begin{align*}
    \limsup_{r\rightarrow \infty} 
      \frac{\beta_{G,S}(r)}
           {\beta_{d}(r)} 
    & \leq
      \limsup_{r\rightarrow \infty}
      \frac{\beta_{d}(r)\cdot |F|}
           {\beta_{d}(r)} 
     = |F|.
  \end{align*}
  Because we assumed that the growth sets of~$G$ and~$G'$ coincide, $\beta_{G,S}$ 
  is also a growth function of~$G'$ with respect to some finite symmetric 
  generating set of~$G'$ and $d = d'$. 
  In combination with Proposition~\ref{propmingrowthgen} we therefore obtain
  \[ |F'| 
     \leq \limsup_{r\rightarrow \infty} 
          \frac{\beta_{G,S}(r)}{\beta_{d}(r)}           
                                        \\
     \leq |F|.
  \]
  Hence, by symmetry of the setup, $|F'| = |F|$, as claimed.

  In the \emph{monoid case} the same argument applies with respect to 
  the monoid generating set
  $E_d \cup \{v_d\} \cup F
  $
  of~$\Z^{d} \times F \cong G$.
\end{proof}

However, the converse does not hold in general:

\begin{exa}\label{exaconverse}
  Let $d \in \N$ and let $F$ be an Abelian finite group that cannot be
  generated by $d+2$~elements, e.g., $F = (\Z/2)^{d + 3}$.  We then
  consider the groups
  \begin{align*}
    G  & := \Z^d \times \Z/|F|
    \quad\text{and}\quad
    G' := \Z^d \times F.
  \end{align*}
  Clearly, $S := (E_d  \times \{0\}) \cup \{(0,1), (0,-1)\}$ is a 
  finite symmetric generating set of~$G$. So, 
  $\beta_{G,S}(1) = \bigl|S \cup \{0,e\}\bigr| = d + 3. 
  $
  On the other hand, if $S' \subset G'$ is a finite monoid generating 
  set of~$G'$, then the canonical projection of~$S'$ to~$F$ must 
  generate~$F$ as a group; hence, $|S'| > d+2$, and so
  \[ \beta_{G',S'}(1) = \bigl|S'\cup \{0,e\} \bigr| > d+3 = \beta_{G,S}(1). 
  \]
  In particular, $\beta_{G,S}$ cannot be realised as a growth function 
  of~$G'$. However, by construction, $\rk G = d = \rk G'$ and
  $|\tors G| = \bigl|\Z/|F|\bigr| = |F| = |\tors G'|$.
\end{exa}


\medskip
\vfill

\noindent
\emph{Clara L\"oh}, 
\textsf{clara.loeh@mathematik.uni-r.de},
\textsf{www.mathematik.uni-r.de/loeh}
\\
\emph{Matthias Mann},
\textsf{matthias\_mann@gmx.de}

\smallskip

  {\small
  \begin{tabular}{@{\quad}l}
    Fakult\"at f\"ur Mathematik\\
    Universit\"at Regensburg\\
    93040 Regensburg\\
    Germany\\
  \end{tabular}}
\end{document}